\newtheoremstyle{theorem}
  {15pt}          
  {15pt}  
  {\sl}  
  {\parindent}
  {\sc}  
  {. }   
  { }    
  {}     
\theoremstyle{theorem}
\newtheorem{lemma}{Lemma}[section]
\newtheorem{theorem}{Theorem}[section]
\newtheoremstyle{defi}
  {15pt}          
  {15pt}  
  {\rm}  
  {\parindent}     
  {\sc}  
  {. }    
  { }    
  {}     
\theoremstyle{defi}
\newtheorem{definition}{Definition}[section]
\newtheorem{remark}{Remark}[section]
   \DeclarePairedDelimiter\ceil{\lceil}{\rceil}
\title[Dzherbashian-Nersesian Fractional Operator\dots]{Inverse Problems for diffusion equation with Fractional Dzherbashian-Nersesian Operator\\ [3pt]}
\author{\normalsize Anwar Ahmad$^1$, Muhammad Ali$^2$, Salman A. Malik $^3$}
\begin{document}

 \vbox to 1.5cm { \vfill }


\bigskip
\medskip

 \begin{abstract}
Fractional Dzherbashian-Nersesian operator is considered and three famous fractional order derivatives namely Riemann-Liouville, Caputo and Hilfer derivatives are shown to be special cases of the earlier one. The expression for Laplace transform of fractional Dzherbashian-Nersesian operator is constructed.
Inverse problems of recovering space dependent and time dependent source terms of a time fractional diffusion equation with involution and involving fractional Dzherbashian-Nersesian operator are considered. The results on existence and uniqueness for the solutions of inverse problems are established. The results obtained here generalize several known results.
 \medskip

{\it MSC 2010}: 34A08, 80A23, 33E12, 34A12

 \smallskip

{\it Key Words and Phrases}: Fractional differential equations, inverse problems, Mittag-Leffler function

 \end{abstract}

 \maketitle

 \vspace*{-20pt}



 \section{Introduction}\label{sec:1}

\setcounter{section}{1}
\setcounter{equation}{0}\setcounter{theorem}{0}
The study of fractional differential equations (FDEs) has been largely motivated by their vast and fascinating applications in chemistry \cite{chem1}, physics \cite{phy1}, engineering \cite{engg1} and many other areas of sciences that have evolved in last few decades. Nonlocality of fractional operators is the reason behind the success of FDEs in modeling natural phenomena. This property makes these operators suitable to describe the long memory or nonlocal effects characterizing most physical phenomena. For in depth study of FDEs, we refer the reader to \cite{samko-book}, \cite{Kilb-Sri}.

There are several definitions of fractional order integrals and derivatives that are available in the literature. The characteristics of a fractional order derivative are described by many authors. We refer, for example, to papers \cite{LuckoNTH}, \cite{Ort}. Moreover, Tarasov proposed that fractional order derivatives of non-integer order cannot satisfy the Leibniz rule (see \cite{taraLR},\cite{taraIJACM}, \cite{taraCNS}). The objective of this article is twofold; first to consider and explore some properties of the forgotten fractional derivative proposed by Dzherbashian-Nersesian in \cite{Dzhr-Ner}, which did not get the considerable attention of scientific community and is a generalization of the Rieman-Liouville, Caputo and Hilfer derivatives. The translation of Russian version of \cite{Dzhr-Ner} has been published in FCAA \cite{Dzhertrans}. Secondly, we consider two inverse source problems for a diffusion equation involving Dzherbashian-Nersesian derivative (see Section  \ref{StatementProblem}).

Solving an equation in a specified region subject to certain given data is called direct problem. At the same time, determining an unknown input which could either be some coefficients, or a source function in equation, by utilizing output is called an inverse problem. The inverse problem is known as inverse problem of coefficient identification or inverse problem of source identification  in accordance with this unknown input respectively. In the last few years, there has been increasing interest in investigating the inverse problems of time fractional differential equations (see \cite{AliFCAA}, \cite{Ali-MMA},  \cite{Ali-SalmantimeMMA}, \cite{Ali-Salmanipse}).

Over the past several years, differential equations with involutions have attracted the considerable amount of interest from both the theoretical and practical applications perspectives, for instance (see \cite{InvolutionDE1}, \cite{InvolutionDE2}). Moreover, inverse problems involving differential equations with involution have been investigated in recent times, (see \cite{Kiran-Saltiwinvolution}, \cite{Salti-Kiraninvolution}, \cite{Tore-Tapd}). However, study on inverse problems in fractional differential equations with involution is still in its embryonic stages.

The rest of the article is organized as follows. In the next section we state the inverse problems and over-specified conditions. Section \ref{prelim} is devoted to preliminaries and the spectral problem is discussed in Section \ref{SpectralProblem}. In Section \ref{mainresults}, under the specific choices of parameters, some well known fractional derivatives are proved to be particular cases of fractional Dzherbashian-Nersesian operator. Furthermore, Laplace transform of fractional Dzherbashian-Nersesian operator is derived and the main results about existence and uniqueness of solution of the inverse problems are proved. Finally the article is concluded in Section \ref{conclusions}.

\section{Statement of Problems}\label{StatementProblem}
In this paper, we are interested in the study of two inverse source problems pertaining to the following time fractional differential equation involving spatial involution
\begin{equation}\label{prbmDN}
	\mathcal{D}^{\varrho_{m}}_{0+,t}u(x,t)-u_{xx}(x,t)+\varepsilon u_{xx}(\pi -x,t)=F(x,t),\:\:
	(x,t)\in\Omega,
\end{equation}
subject to initial conditions
\begin{equation}\label{initDN}
	\mathcal{D}^{\varrho_{n}}_{0+,t}u(x,t)|_{t=0}=\varphi_{n}(x),\quad x\in (0,\pi), \quad n=0,...,m-1,\quad m\in \mathbb{N},
\end{equation}
and boundary conditions
\begin{equation}\label{bndryDN}
	u(0,t)=0=u(\pi,t),\quad t\in(0,T],
\end{equation}
where $\mathcal{D}^{\varrho_{k}}_{0+,t}$ stands for Dzherbashian-Nersesian fractional operator of order $\varrho_{k}$ such that $0<\varrho_{k}\leq m$, $\varepsilon$ is a real number and $\Omega:=(0,\pi)\times(0,T]$ (see Section \ref{prelim}).

In the first inverse source problem, we reckon the source term $F(x,t)$ depends only on the space variable, i.e., $F(x,t):=f(x)$. We shall determine the source term $f(x)$ and $u(x,t)$ assuming the following over-determination condition for unique solvability  of (\ref{prbmDN})-(\ref{bndryDN})
\begin{eqnarray}\label{spaceoverdetcdn}
	u(x,T)=\psi(x),\quad t<T.
\end{eqnarray}
By a regular solution of space dependent inverse source problem, we mean a pair of functions $\{u(x,t), f(x)\}$ such that $t^{\varrho_{m}}u(.,t) \in C^{2}([0,\pi]),\\ t^{\varrho_{m}}\mathcal{D}^{\varrho_{m}}_{0+,t}u(x,.) \in C([0,T])$ and $f(x) \in C([0,\pi])$.\\
In the second inverse source problem, we consider the source term as $F(x,t):=a(t)f(x,t)$. Whilst $f(x,t)$ is known, we are keen in recovering the time dependent term $a(t)$ and $u(x,t)$.
We propose total energy of the system $E(t)$ as the over-determination condition to have inverse problem (\ref{prbmDN})-(\ref{bndryDN}) uniquely solvable given by
\begin{eqnarray}\label{timeoverdetcdn}
	\int_{0}^{\pi}u(x,t)dx=E(t),\quad t\in(0,T].
\end{eqnarray}
A regular solution of the time dependent inverse source problem is the pair of functions $\{u(x,t), a(t)\}$ such that $t^{\varrho_{m}}u(.,t) \in C^{2}([0,\pi]),\; t^{\varrho_{m}}\mathcal{D}^{\varrho_{m}}_{0+,t}u(x,.) \in C([0,T])$ and $a(t) \in C([0,T])$.
\section{Preliminaries}\label{prelim}
\noindent In this section, we present some elementary definitions and notions for readers' convenience. Some basic results about Mittag-Leffler functions are also presented.
\begin{definition}\cite{samko-book}
	Let us denote by $AC^{n}[a, b]$, where $n\in \mathbb{N}$, the space of functions $g(t)$ which have continuous derivatives up to order $n-1$ on $[a,b]$ with $g^{(n-1)}(t)\in AC^{n}([a, b])$.
\end{definition}
\begin{definition}\cite{samko-book},\cite{Kilb-Sri}
	Let $g(t)\in L_{1}([a, b])$, the left sided Riemann-Liouville fractional integral $J^{\zeta}_{0_+,t}$ of order $\zeta$ is defined as
\begin{align*}		 J^{\zeta}_{0_+,t}g(t):=\frac{1}{\Gamma(\zeta)}\int_{0}^{t}\frac{g(\tau)}{(t-\tau)^{1-\zeta}}d\tau, \quad \zeta>0.
	\end{align*}
\end{definition}	
\begin{definition}\cite{samko-book},\cite{Kilb-Sri}\label{RL-def}
	Let $g(t)\in L_{1}([a,b])$, the left sided Riemann-Liouville fractional derivative $D^{\zeta}_{0_+,t}$ of order $\zeta$ is defined as
	\begin{align*}
		D^{\zeta}_{0_+,t}g(t):= \frac{d^{n}}{dt^{n}}J^{n-\zeta}_{0_+,t}g(t)=\frac{1}{\Gamma(n-\zeta)}\frac{d^{n}}{dt^{n}}\int^t_0
		\frac{g(\tau)}{(t-\tau)^{1+\zeta-n}}d\tau, \: \:n=\ceil*{\zeta}.
	\end{align*}
\end{definition}
\begin{definition}\cite{Dzhr-Ner}\label{DzherNer-def}
	Dzherbashian-Nersesian fractional operator $\mathcal{D}^{\varrho_{m}}_{0+,t}$ of order $\varrho_{m}$ is defined as
	\begin{align}
		\mathcal{D}^{\varrho_{m}}_{0+,t} g(t):=J^{1-\zeta_{m}}_{0+,t}D^{\zeta_{m-1}}_{0+,t}D^{\zeta_{m-2}}_{0+,t}...D^{\zeta_{1}}_{0+,t}D^{\zeta_{0}}_{0+,t}g(t),\quad m\in \mathbb{N},\quad t>0,\label{DzhrNer-mathematical}
	\end{align}
\end{definition}
\noindent where $\varrho_{m} \in (0,m]$ is given by
\begin{align*}
\varrho_{m}=\sum_{j=0}^{m}\zeta_{j}-1>0,\quad \zeta_{j}\in (0,1].
\end{align*}
It may be noted that $J^{\xi}_{0+,t}$ and $D^{\xi}_{0+,t}$ are the Riemann-Liouville fractional integral and Riemann-Liouville fractional derivative of order $\xi$ respectively.

Specifically, for $m=1$ in (\ref{DzhrNer-mathematical}), we have
\begin{align*}
	\mathcal{D}^{\varrho_{1}}_{0+,t} g(t):=J^{1-\zeta_{1}}_{0+,t}	 D^{\zeta_{0}}_{0+,t}g(t)
\end{align*}

\begin{definition}\cite{GoreMittagBook}
	The two parameter Mittag-Leffler function is defined as
	\begin{align*}
E_{\beta,\zeta}(z):=\sum_{k=0}^{\infty}\frac{z^{k}}{\Gamma(\beta k+ \zeta)},\quad Re(\beta)>0,\; \zeta,z \in\mathbb{C}.	
	\end{align*}
\end{definition}
For $\zeta=1$, $E_{\beta,\zeta}(z)$ reduces to the classical Mittag-Leffler function, i.e.,
\begin{align*}
	 E_{\beta,1}(z):=E_{\beta}(z)=\sum_{k=0}^{\infty}\frac{z^{k}}{\Gamma(\beta k+ 1)}.	
\end{align*}
\noindent Moreover, the Mittag-Leffler type function is defined as
\begin{align*}
	e_{\beta,\zeta}(t;\lambda):=t^{\zeta-1}E_{\beta,\zeta}(-\lambda t^{\beta}),\quad Re(\beta)>0,\; \zeta \in\mathbb{C},\;t,\;\lambda>0.
\end{align*}
\begin{lemma}\cite{podlubny}\label{podlem}
	If $\beta<2$, $\zeta$ is an arbitrary real number, $\mu$ is such that $\pi\beta/2<\mu<\min\{\pi,\pi\beta\}$, $z\in\mathbb{C}$ such that $|z|\geq0$,
	$\mu\leq |arg(z)|\leq\pi$ and $C_{1}$ is a real constant, then
	\[
	|E_{\beta,\zeta}(z)|\leq \frac{C_{1}}{1+|z|}.
	\]
\end{lemma}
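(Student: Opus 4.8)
The plan is to prove the estimate in two regimes, a bounded region and a far field, and then amalgamate them into a single constant $C_1$. Throughout I would restrict $z$ to the closed sector $S_\mu := \{z\in\mathbb{C}: \mu\le|\arg z|\le\pi\}$ (together with $z=0$, where $E_{\beta,\zeta}(0)=1/\Gamma(\zeta)$ is harmless). The point of $S_\mu$ is that it keeps $z$ away from the positive real axis, along which $E_{\beta,\zeta}$ grows like $\exp(z^{1/\beta})$; the hypotheses $\beta<2$ and $\pi\beta/2<\mu<\min\{\pi,\pi\beta\}$ are exactly what is needed to make the contour argument below work.

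First I would record the Hankel-contour integral representation of the Mittag-Leffler function,
\[
E_{\beta,\zeta}(z) = \frac{1}{2\pi i}\int_{\gamma} \frac{s^{\beta-\zeta}\,e^{s}}{s^{\beta}-z}\,ds,
\]
where $\gamma$ is a loop running in from $-\infty$ below the negative real axis, encircling the origin, and returning to $-\infty$ above it, with $z$ interior to $\gamma$. This reproduces the defining series term by term through Hankel's formula $\tfrac{1}{\Gamma(w)}=\tfrac{1}{2\pi i}\int_{\gamma}e^{s}s^{-w}\,ds$. For the far field I would deform $\gamma$ so that the branch points $s=z^{1/\beta}$ of the integrand lie \emph{outside} the loop; this deformation is legitimate precisely when $z\in S_\mu$ and $\beta<2$, since then the roots of $s^\beta=z$ stay bounded away from $\gamma$. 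Expanding
\[
\frac{1}{s^{\beta}-z} = -\sum_{k=1}^{N}\frac{s^{\beta(k-1)}}{z^{k}} + \frac{s^{\beta N}}{z^{N}(s^{\beta}-z)}
\]
and integrating term by term (again using Hankel's formula with $w=\zeta-\beta k$) yields the asymptotic expansion
\[
E_{\beta,\zeta}(z) = -\sum_{k=1}^{N}\frac{z^{-k}}{\Gamma(\zeta-\beta k)} + O\!\left(|z|^{-N-1}\right),\qquad |z|\to\infty,
\]
uniformly for $z\in S_\mu$. Taking $N=1$ gives a radius $R$ and a constant $C_2$ with $|E_{\beta,\zeta}(z)|\le C_2/|z|$ for all $z\in S_\mu$ with $|z|\ge R$, and hence $|E_{\beta,\zeta}(z)|\le C_2'/(1+|z|)$ there.

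It remains to treat the compact piece $S_\mu\cap\{|z|\le R\}$. Since $\operatorname{Re}(\beta)>0$, the defining series converges locally uniformly, so $E_{\beta,\zeta}$ is continuous and therefore bounded by some $M$ on this compact set; because $1+|z|\le 1+R$ there, we get $|E_{\beta,\zeta}(z)|\le M\le M(1+R)/(1+|z|)$. Choosing $C_1=\max\{C_2',\,M(1+R)\}$ then yields the claimed bound on all of $S_\mu$. The main obstacle is the far-field step: one must justify the contour deformation and, more delicately, show that the remainder integral $z^{-N}\int_{\gamma} s^{\beta N-\zeta}e^{s}(s^{\beta}-z)^{-1}\,ds$ is genuinely $O(|z|^{-N-1})$ \emph{uniformly} up to the boundary rays $|\arg z|=\mu$, where $z$ comes closest to the forbidden directions. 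This is where the precise window $\pi\beta/2<\mu<\min\{\pi,\pi\beta\}$ is indispensable, since it guarantees that $s^{\beta}-z$ stays bounded away from zero along $\gamma$, making the remainder bound uniform in the sector.
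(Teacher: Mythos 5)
Your proposal is correct and is essentially the classical argument: the paper itself offers no proof of this lemma, citing it directly from Podlubny's book, and your route (Hankel-contour representation, geometric expansion of $(s^{\beta}-z)^{-1}$ with term-by-term integration via Hankel's formula, a uniform $O(|z|^{-2})$ remainder in the sector $\mu\le|\arg z|\le\pi$, and continuity on the residual compact set) is exactly the proof given in that reference. The only nitpick is that ``$z$ interior to $\gamma$'' should more precisely read that the circular part of $\gamma$ has radius exceeding $|z|^{1/\beta}$, so that the roots of $s^{\beta}=z$ are enclosed (resp.\ excluded after deformation); this does not affect the validity of the argument.
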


\begin{lemma}\cite{Ali-MMA}
	For $g(t) \in C([0,T])$, the following property holds
	\begin{align*}
		|g(t)*e_{\zeta,\zeta}(t;\lambda)| \leq \frac{C_{1}}{\lambda}\|g\|_{t},\quad \zeta, \;t,\; \lambda>0,
	\end{align*}
	where $C_{1}$ is a positive constant and $\|.\|_{t}$ is Chebyshev norm defined as
	\begin{align*}
	\|g\|_{t}:=\max_{0\leq t \leq T}|g(t)|.
	\end{align*}
\end{lemma}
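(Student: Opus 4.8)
The plan is to estimate the convolution directly from its definition. Writing it out,
\begin{align*}
g(t)*e_{\zeta,\zeta}(t;\lambda)=\int_{0}^{t}g(t-\tau)\,\tau^{\zeta-1}E_{\zeta,\zeta}(-\lambda\tau^{\zeta})\,d\tau,
\end{align*}
I would first bound $|g(t-\tau)|\le\|g\|_{t}$ uniformly in $\tau\in[0,t]$ and pull this factor outside the integral. This reduces the whole problem to estimating the integral of the Mittag-Leffler kernel $e_{\zeta,\zeta}(\tau;\lambda)=\tau^{\zeta-1}E_{\zeta,\zeta}(-\lambda\tau^{\zeta})$ over $[0,t]$.

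The key step is the differentiation identity $\frac{d}{d\tau}E_{\zeta}(-\lambda\tau^{\zeta})=-\lambda\,\tau^{\zeta-1}E_{\zeta,\zeta}(-\lambda\tau^{\zeta})$, which follows by differentiating the defining power series of $E_{\zeta}$ term by term and reindexing. This exhibits the kernel as an exact derivative up to the factor $-1/\lambda$, so that
\begin{align*}
\int_{0}^{t}\tau^{\zeta-1}E_{\zeta,\zeta}(-\lambda\tau^{\zeta})\,d\tau=\frac{1}{\lambda}\bigl[1-E_{\zeta}(-\lambda t^{\zeta})\bigr].
\end{align*}
Since for $0<\zeta\le 1$ and $\lambda,t>0$ one has $0<E_{\zeta}(-\lambda t^{\zeta})\le 1$, the bracketed quantity is bounded by $1$, giving $\int_{0}^{t}e_{\zeta,\zeta}(\tau;\lambda)\,d\tau\le 1/\lambda$ and hence the claimed inequality, in fact with the sharp constant $C_{1}=1$.

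The one point requiring care — and the main obstacle — is the passage to absolute values on the kernel: to replace $\bigl|\int g\,e_{\zeta,\zeta}\bigr|$ by $\|g\|_{t}\int e_{\zeta,\zeta}$ I need $e_{\zeta,\zeta}(\tau;\lambda)\ge 0$ on $[0,t]$. For $0<\zeta\le 1$ this is precisely the complete monotonicity (positivity) of $E_{\zeta,\zeta}(-x)$ for $x\ge 0$, a standard property of the Mittag-Leffler function; for $\zeta=1$ it is transparent since $E_{1,1}(-\lambda\tau)=e^{-\lambda\tau}$. As a cross-check that sidesteps invoking positivity, one may instead apply Lemma \ref{podlem} with $z=-\lambda\tau^{\zeta}$ (so $|\arg z|=\pi$ and the hypotheses hold for $\zeta<2$) to obtain $|E_{\zeta,\zeta}(-\lambda\tau^{\zeta})|\le C_{1}/(1+\lambda\tau^{\zeta})$; integrating this bound yields $\tfrac{C_{1}}{\lambda\zeta}\ln(1+\lambda t^{\zeta})$, which is weaker by a logarithmic factor. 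I would therefore favor the sharp route through the antiderivative identity together with positivity of the kernel.
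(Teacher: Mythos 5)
Your argument is correct on the range $0<\zeta\le 1$, but note first that the paper never proves this lemma at all: it is imported verbatim from \cite{Ali-MMA}, so the only comparison available is with the auxiliary results the paper does state, and measured against those your proof is essentially the intended one. The antiderivative identity you derive, $\int_0^t e_{\zeta,\zeta}(\tau;\lambda)\,d\tau=\frac{1}{\lambda}\bigl(1-E_{\zeta}(-\lambda t^{\zeta})\bigr)$, is exactly the paper's Lemma \ref{AliMittagtypeLemma1} combined with the term-by-term integration formula $\int_0^t e_{\zeta,\zeta}(\tau;\lambda)\,d\tau=e_{\zeta,\zeta+1}(t;\lambda)$, and you correctly isolate the one genuine subtlety: pulling $\|g\|_{t}$ out of the convolution requires $E_{\zeta,\zeta}(-x)\ge 0$, i.e. complete monotonicity, which holds precisely when $0<\zeta\le 1$. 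Two caveats. First, the lemma as printed asserts the bound for every $\zeta>0$, which your proof does not cover: for $1<\zeta<2$ the kernel $E_{\zeta,\zeta}(-x)$ changes sign, positivity fails, and, as your own cross-check shows, the $O(1/(1+|z|))$ estimate of Lemma \ref{podlem} is too weak (it leaves a logarithmic factor, which is not $O(1/\lambda)$ uniformly in $\lambda$); a proof in that range would need the refined decay $E_{\zeta,\zeta}(-x)=O(x^{-2})$, valid because the $x^{-1}$ term of the asymptotic expansion carries the factor $1/\Gamma(0)=0$. Since every theorem in the paper assumes $0<\varrho_{m}<1$, this restriction does not affect any use the paper makes of the lemma. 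Second, the variant closest to the cited source bounds the integrated kernel not by positivity of $E_{\zeta}$ but by Lemma \ref{podlem} applied after integration: $e_{\zeta,\zeta+1}(t;\lambda)=t^{\zeta}E_{\zeta,\zeta+1}(-\lambda t^{\zeta})$ together with $t^{\zeta}\,|E_{\zeta,\zeta+1}(-\lambda t^{\zeta})|\le C_{1}t^{\zeta}/(1+\lambda t^{\zeta})\le C_{1}/\lambda$, which is where the otherwise unexplained constant $C_{1}$ in the statement comes from; your route is equivalent but sharpens the constant to $1$.
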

\begin{lemma}\cite{AliFCAA}\label{AliMittagtypeLemma1}
	The Mittag-Leffler type functions $e_{\zeta,\zeta+1}(t;\lambda)$ have the following property
	\begin{align*}
		 e_{\zeta,\zeta+1}(t;\lambda)=\frac{1}{\lambda}\biggl(1-e_{\zeta,1}(t;\ \lambda)\biggr),\quad t,\; \lambda>0.
	\end{align*}
\end{lemma}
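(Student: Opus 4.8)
The plan is to prove the identity by direct manipulation of the defining power series, since for fixed $\lambda$ both sides are entire functions of $t$ and the claimed relation is purely algebraic. First I would unfold the left-hand side using the definition $e_{\beta,\zeta}(t;\lambda)=t^{\zeta-1}E_{\beta,\zeta}(-\lambda t^{\beta})$ together with the series for the two-parameter Mittag-Leffler function, which gives
\[
e_{\zeta,\zeta+1}(t;\lambda)=t^{\zeta}\sum_{k=0}^{\infty}\frac{(-\lambda t^{\zeta})^{k}}{\Gamma(\zeta k+\zeta+1)}=\sum_{k=0}^{\infty}\frac{(-\lambda)^{k}\,t^{\zeta(k+1)}}{\Gamma(\zeta(k+1)+1)}.
\]

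The key step is to re-index the sum by setting $j=k+1$, which turns the exponent $\zeta(k+1)$ into $\zeta j$ and the Gamma argument into $\zeta j+1$, so that the summand matches, up to a constant factor, the generic term of $E_{\zeta,1}(-\lambda t^{\zeta})$. After pulling out one factor of $-\lambda$ to absorb the mismatch between $(-\lambda)^{k}=(-\lambda)^{j-1}$ and $(-\lambda)^{j}$, the sum reads
\[
e_{\zeta,\zeta+1}(t;\lambda)=-\frac{1}{\lambda}\sum_{j=1}^{\infty}\frac{(-\lambda t^{\zeta})^{j}}{\Gamma(\zeta j+1)}.
\]

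Finally I would recognize the remaining series as the expansion of $E_{\zeta,1}(-\lambda t^{\zeta})=e_{\zeta,1}(t;\lambda)$ with its $j=0$ term removed, i.e. $\sum_{j=1}^{\infty}(-\lambda t^{\zeta})^{j}/\Gamma(\zeta j+1)=e_{\zeta,1}(t;\lambda)-1$. Substituting this back yields $e_{\zeta,\zeta+1}(t;\lambda)=-\frac{1}{\lambda}\bigl(e_{\zeta,1}(t;\lambda)-1\bigr)=\frac{1}{\lambda}\bigl(1-e_{\zeta,1}(t;\lambda)\bigr)$, which is the claim. I do not anticipate any genuine analytic obstacle: the only points requiring care are the bookkeeping in the index shift and observing that the rearrangement is legitimate because the Mittag-Leffler series converges absolutely for every $t,\lambda>0$ (with $Re(\zeta)>0$), so the term-by-term manipulation is justified throughout.
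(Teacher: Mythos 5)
Your proof is correct: the index shift $j=k+1$ and the absorption of one factor of $-\lambda$ give exactly the recurrence $E_{\zeta,1}(z)=1+zE_{\zeta,\zeta+1}(z)$ evaluated at $z=-\lambda t^{\zeta}$, and absolute convergence of the Mittag-Leffler series justifies the rearrangement. The paper itself states Lemma \ref{AliMittagtypeLemma1} as a citation to \cite{AliFCAA} without reproducing a proof, and your series-manipulation argument is precisely the standard one underlying that reference, so there is nothing substantive to compare beyond noting that your inessential framing remark that both sides are ``entire in $t$'' should really say entire in $z=-\lambda t^{\zeta}$ (for non-integer $\zeta$ the functions have a branch point at $t=0$), which does not affect the argument.
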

\begin{lemma}\cite{AliFCAA}\label{AliMittagtypeLemma1'}
	The Mittag-Leffler type functions $e_{\zeta,1}(T;\lambda)$, where  $T, \lambda>0$, possess the following property for $0<\zeta<1$
	\begin{align*}
		\frac{1}{1-e_{\zeta,1}(T; \lambda)}\leq C_{2},
	\end{align*}
	where $C_{2}$ is a positive constant.
\end{lemma}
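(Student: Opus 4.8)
The plan is to reduce the statement to a monotonicity property of the classical Mittag-Leffler function. By the definition of the Mittag-Leffler type function we have $e_{\zeta,1}(T;\lambda)=T^{0}E_{\zeta,1}(-\lambda T^{\zeta})=E_{\zeta}(-\lambda T^{\zeta})$, so writing $z:=\lambda T^{\zeta}>0$ the claim becomes a bound on $1/(1-E_{\zeta}(-z))$. First I would establish that the denominator is strictly positive. For $0<\zeta<1$ the map $z\mapsto E_{\zeta}(-z)$ is completely monotone on $[0,\infty)$ (a classical result of Pollard), hence strictly decreasing; since $E_{\zeta}(0)=1$, this forces $0<E_{\zeta}(-z)<1$ for every $z>0$, so that $1-E_{\zeta}(-z)\in(0,1)$ and the expression is well defined and positive.

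Next I would produce a uniform lower bound for the denominator. In the intended application the parameter $\lambda$ ranges over the eigenvalues $\lambda_{n}$ of the spectral problem of Section \ref{SpectralProblem}, which are bounded below by the smallest eigenvalue $\lambda_{1}>0$, while $T$ is fixed. Hence $z=\lambda T^{\zeta}\geq \lambda_{1}T^{\zeta}=:z_{0}>0$, and monotonicity gives $E_{\zeta}(-z)\leq E_{\zeta}(-z_{0})<1$, whence
\[
1-E_{\zeta}(-z)\geq 1-E_{\zeta}(-z_{0})>0 .
\]
Taking reciprocals yields $\dfrac{1}{1-e_{\zeta,1}(T;\lambda)}\leq \dfrac{1}{1-E_{\zeta}(-z_{0})}=:C_{2}$, which is the desired constant.

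Alternatively, the conclusion can be obtained from the growth estimate of Lemma \ref{podlem}, which gives $E_{\zeta}(-z)\leq C_{1}/(1+z)$ and therefore $1-E_{\zeta}(-z)\geq 1-C_{1}/(1+z)$; for $z$ large this is bounded away from zero, while for $z$ in a compact interval $[z_{0},R]$ one invokes continuity together with the strict inequality $E_{\zeta}(-z)<1$ to secure a positive lower bound, and the two regimes combine to produce $C_{2}$.

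The main obstacle is the behaviour of the quotient near $z=0$: since $E_{\zeta}(0)=1$ we have $1-E_{\zeta}(-z)\to 0$ as $z\to 0^{+}$, so the reciprocal blows up and no bound uniform over all $\lambda>0$ can hold. The crux of the argument is thus to exploit that $\lambda$ is bounded away from zero (the eigenvalues satisfy $\lambda_{n}\geq\lambda_{1}>0$) and that $T$ is fixed, so that $z=\lambda T^{\zeta}$ remains in a region on which $1-E_{\zeta}(-z)$ has a strictly positive infimum; accordingly the constant $C_{2}$ will depend on $\zeta$, on $T$, and on the lower bound $\lambda_{1}$.
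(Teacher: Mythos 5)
The paper offers no proof of this lemma at all: it is imported verbatim from \cite{AliFCAA}, so there is no internal argument to compare yours against, and your reconstruction is essentially the proof a reader must supply. Your argument is correct: the identification $e_{\zeta,1}(T;\lambda)=E_{\zeta}(-\lambda T^{\zeta})$, Pollard's complete monotonicity of $z\mapsto E_{\zeta}(-z)$ for $0<\zeta<1$ (hence positivity and strict decrease from $E_{\zeta}(0)=1$), and the reciprocal bound $\frac{1}{1-E_{\zeta}(-z)}\leq\frac{1}{1-E_{\zeta}(-z_{0})}$ for $z\geq z_{0}>0$ are all sound, as is your alternative two-regime argument via Lemma \ref{podlem}. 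The most valuable part of your write-up is the observation that the statement, as written for arbitrary $\lambda>0$, is actually false: since $e_{\zeta,1}(T;\lambda)\to 1$ as $\lambda\to 0^{+}$, no single constant $C_{2}$ can work uniformly in $\lambda$, and the constant must depend on a positive lower bound for $\lambda$ (besides $\zeta$ and $T$). This defect is harmless for the paper for exactly the reason you give: the lemma is only ever invoked with $\lambda$ equal to one of the eigenvalues $\lambda_{1},\lambda_{2k},\lambda_{2k+1}$ of the spectral problem of Section \ref{SpectralProblem}, which are bounded below by $\min\{1-\varepsilon,\,4(1+\varepsilon)\}>0$ when $|\varepsilon|<1$; and since $\frac{1}{1-E_{\zeta}(-\lambda T^{\zeta})}$ is decreasing in $\lambda$, a single $C_{2}$ (attained at the smallest eigenvalue) then serves all eigenvalues simultaneously, which is what the estimates (\ref{spaceF10kestimate})--(\ref{spaceF2kestimate}) require. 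In short: your proof is correct, it supplies an argument where the paper only cites, and it includes a legitimate correction to the hypotheses under which the stated bound can hold.
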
	

\section{Spectral Problem}\label{SpectralProblem}
\noindent The spectral problem for (\ref{prbmDN})-(\ref{bndryDN}) is
\begin{eqnarray}\label{eqspecprbm}
	X''(x)-\varepsilon X''(\pi-x)+\lambda X(x)=0, \quad 0<x<\pi,\: |\varepsilon|<1,
\end{eqnarray}
with boundary conditions
\begin{eqnarray}\label{specbn}
	X(0)=0= X(\pi).
\end{eqnarray}
\noindent The spectral problem (\ref{eqspecprbm})-(\ref{specbn}) has the eigenvalues
\begin{align*}
	\lambda_{1}=(1-\varepsilon),\;
	\lambda_{2k+1}=(1-\varepsilon)(2k+1)^{2},\;
	\lambda_{2k}=(1+\varepsilon)4k^{2},
	\; k \in \mathbb{N},
\end{align*}
and corresponding eigenfunctions are
\begin{align*}
	X_{1}(x)=\sqrt{\frac{2}{\pi}}\sin x,\;
	X_{2k+1}(x)=\sqrt{\frac{2}{\pi}}\sin(2k+1)x,\;
	X_{2k}(x)=\sqrt{\frac{2}{\pi}}\sin2kx,
\end{align*}
where $k\in \mathbb{N}$. The set of eigenfunctions $\{X_{k}:k\in \mathbb{N}\}$ form an orthonormal basis in $L^{2}([0,\pi])$ \cite{Tore-Tapd}.

It may be noted that
\begin{align}\label{lamdasestimate}
	\frac{1}{\lambda_{2k}}\leq \frac{1}{(1+\varepsilon)k^{2}},\;\; \frac{1}{\lambda_{2k+1}}\leq \frac{1}{(1-\varepsilon)k^{2}},\; k\in \mathbb{N}.
\end{align}
The following lemmata hold for the set of functions $\{X_{k}(x): k\in \mathbb{N}\}$.
\begin{lemma}\label{estimatelemma1}
	Let $|\varepsilon|<1$, and $g(x)\in C^{2}([0,\pi])$ be such that $g(0)=0=g(\pi)$. Then the following condition holds
	\begin{align*}
		|g_{k}| \leq \frac{1}{k^{2}}\|g''\|_{x}, \quad k \in \mathbb{N}.
	\end{align*}
where $\|.\|_{x}$ is norm in $L^{2}(0,\pi)$ and is defined as
\begin{align*}
	\|g\|_{x}:= \sqrt{\langle g,g \rangle}.
\end{align*}
Here $\langle .,.\rangle$  denotes the inner product defined as $\langle f,g\rangle:=\int_{0}^{\pi}f(x)g(x)dx.$
	\begin{proof}
		Consider $g_{2k+1}=\langle g(x),X_{2k+1}(x)\rangle$.
		\begin{align*}
			g_{2k+1}=\int_{0}^{1}g(x)\sqrt{\frac{2}{\pi}}\sin(2k+1)x dx,
		\end{align*}
		Integration two times by parts yields,
		\begin{align*}
			 g_{2k+1}=\sqrt{\frac{2}{\pi}}\frac{g(0)+g(\pi)}{2k+1}-\frac{1}{(2k+1)^{2}}\langle g''(x),X_{2k+1}(x)\rangle.
		\end{align*}
		On using the boundary conditions $g(0)=0=g(\pi)$ and Cauchy Schwartz inequality, we obtain
		\begin{align*}
			|g_{2k+1}| \leq \frac{1}{k^{2}}\|g''\|_{x}, \quad k \in \mathbb{N}.
		\end{align*}
	\end{proof}
\end{lemma}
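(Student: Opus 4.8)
The plan is to reduce the Fourier coefficient $g_n=\langle g,X_n\rangle$ to a Fourier coefficient of the second derivative $g''$, exploiting that every eigenfunction here is (up to the constant $\sqrt{2/\pi}$) a pure sine $\sin(nx)$ with $n\in\mathbb{N}$, hence satisfies $X_n''=-n^2 X_n$ and vanishes at both endpoints. Concretely, I would start from $g_n=\sqrt{2/\pi}\int_0^\pi g(x)\sin(nx)\,dx$ and integrate by parts twice. The first integration (with $u=g$, $dv=\sin(nx)\,dx$) produces a boundary term proportional to $g(0)$ and $g(\pi)$, which is killed by the hypothesis $g(0)=0=g(\pi)$; the second (with $u=g'$, $dv=\cos(nx)\,dx$) produces a term proportional to $\sin(n\pi)$ and $\sin 0$, both zero because $n$ is an integer. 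What survives is exactly
\begin{align*}
g_n=-\frac{1}{n^2}\langle g'',X_n\rangle .
\end{align*}
Equivalently, this is the self-adjointness identity $\langle g,X_n''\rangle=\langle g'',X_n\rangle$ (the boundary term $[g X_n'-g'X_n]_0^\pi$ vanishes since $g$ and $X_n$ both vanish at $0$ and $\pi$) combined with $X_n''=-n^2X_n$.

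Once this representation is in hand, the estimate is immediate: by the Cauchy–Schwarz inequality and the orthonormality of the eigenfunctions ($\|X_n\|_x=1$),
\begin{align*}
|g_n|\le \frac{1}{n^2}\,\|g''\|_x\,\|X_n\|_x=\frac{1}{n^2}\,\|g''\|_x .
\end{align*}
It then remains to reconcile the frequency $n$ with the counting index $k$ of the statement. The eigenfunctions come in the three families $X_1$, $X_{2k+1}$, $X_{2k}$, for which $n=1$, $n=2k+1$, $n=2k$; in every case $n\ge k$, so $1/n^2\le 1/k^2$ and the claimed uniform bound $|g_k|\le k^{-2}\|g''\|_x$ follows. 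The paper records only the odd family $g_{2k+1}$; the even family is verbatim the same computation, the sole difference being $\cos(n\pi)=1$ in the discarded first boundary term, which the boundary conditions still annihilate.

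I do not expect a genuine obstacle here: this is the standard ``integrate by parts twice against a sine'' decay estimate. The hypothesis $|\varepsilon|<1$ enters only indirectly, through the spectral problem of Section \ref{SpectralProblem}, to guarantee that $\{X_k\}$ is an orthonormal basis of $L^2([0,\pi])$ and hence that Cauchy–Schwarz with $\|X_n\|_x=1$ is legitimate. The only points demanding a little care are verifying that both boundary terms vanish for each parity of $n$ and the elementary index comparison $n\ge k$ across the three families.
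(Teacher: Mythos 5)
Your proposal is correct and follows essentially the same route as the paper: integrate by parts twice against $\sin(nx)$, use $g(0)=0=g(\pi)$ (and $\sin(n\pi)=0$) to kill the boundary terms, then apply Cauchy--Schwarz with $\|X_n\|_x=1$ and the comparison $1/n^2\le 1/k^2$ across the three eigenfunction families. If anything, your write-up is more complete than the paper's, which records only the odd-index family $g_{2k+1}$ and leaves the even family and the index comparison implicit.
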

\begin{lemma}\label{estimatelemma2}
	Let $|\varepsilon|<1$, and $g(x)\in C^{4}([0,\pi)]$ be such that $g^{(i)}(0)=0=g^{(i)}(\pi)$ for $i=0,2$. Then we have the following condition
	\begin{align*}
		|g_{k}| \leq \frac{1}{k^{4}}\|g^{(iv)}\|_{x}, \quad k \in \mathbb{N}.
	\end{align*}
	\begin{proof}
		The relation can be obtained  on the same lines as in the proof of Lemma \ref{estimatelemma1}.
	\end{proof}	
\end{lemma}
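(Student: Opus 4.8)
The plan is to imitate the proof of Lemma \ref{estimatelemma1} verbatim, replacing the two integrations by parts carried out there with four. Taking the odd-indexed eigenfunction $X_{2k+1}(x)=\sqrt{2/\pi}\,\sin((2k+1)x)$ as the representative case and writing $n=2k+1$, I would start from
\[
g_{2k+1}=\langle g,X_{2k+1}\rangle=\sqrt{\frac{2}{\pi}}\int_{0}^{\pi}g(x)\sin(nx)\,dx,
\]
and integrate by parts four successive times, pushing the derivative onto $g$ while integrating the trigonometric factor.

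At each step an antiderivative of $\sin(nx)$ or $\cos(nx)$ contributes a factor $1/n$, so after four steps the surviving integral carries a prefactor $1/n^{4}$ and its integrand is $g^{(iv)}$. The only point requiring genuine care is that every boundary term must cancel. The boundary terms produced when integrating past $g$ and past $g''$ involve $\cos(nx)$ evaluated at $0$ and $\pi$; these are annihilated precisely by the hypotheses $g(0)=g(\pi)=0$ and $g''(0)=g''(\pi)=0$. The boundary terms produced when integrating past $g'$ and past $g'''$ involve $\sin(nx)$ evaluated at $0$ and $\pi$, and these vanish automatically since $\sin(n\pi)=\sin 0=0$ for integer $n$. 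This explains why only the even-order parity conditions $i=0,2$ are imposed in the statement: the odd-order boundary contributions never survive.

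The net outcome is the clean identity
\[
g_{2k+1}=\frac{1}{n^{4}}\,\langle g^{(iv)},X_{2k+1}\rangle .
\]
Applying the Cauchy--Schwarz inequality and using $\|X_{2k+1}\|_{x}=1$ (the eigenfunctions being orthonormal) gives $|g_{2k+1}|\leq n^{-4}\|g^{(iv)}\|_{x}$, and since $n=2k+1>k$ we have $n^{-4}\leq k^{-4}$, which is the claimed bound. The identical computation applied to the even-indexed eigenfunction $X_{2k}(x)=\sqrt{2/\pi}\,\sin(2kx)$ (with $n=2k$) disposes of the remaining coefficients, with the same boundary-term bookkeeping.

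I do not anticipate a real obstacle here: the argument is a straightforward four-fold iteration of the two-fold integration by parts already performed in Lemma \ref{estimatelemma1}. The only step deserving attention is verifying that the assumed $C^{4}$ regularity together with the two parity conditions is exactly enough to justify all four integrations by parts and to force every boundary contribution to vanish, which is what I have confirmed above.
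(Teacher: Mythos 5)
Your proposal is correct and is precisely what the paper intends: the paper's proof of this lemma simply defers to the argument of Lemma \ref{estimatelemma1} (integration by parts, vanishing boundary terms, Cauchy--Schwarz), and your four-fold integration by parts with the observation that the odd-order boundary terms vanish automatically via $\sin(n\pi)=\sin 0=0$ is exactly that argument carried to fourth order. The final step $n^{-4}\leq k^{-4}$ for $n\in\{2k,2k+1\}$ also mirrors how the paper passes from the eigenfunction index to the bound in $k$.
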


\section{Main Results}\label{mainresults}
In this section, we are going to present our main results. At first some interesting facts about fractional Dzherbashian-Nersesian operator are revealed. Next, we establish the existence and uniqueness results for the solution of inverse problems (\ref{prbmDN})-(\ref{spaceoverdetcdn}) and (\ref{prbmDN})-(\ref{bndryDN}) alongside (\ref{timeoverdetcdn}).
\subsection{Some interesting facts and important results} In this subsection, we prove in different cases how under certain fixation of parameters, fractional Dzherbashian-Nersesian operator is generalization to some well known fractional derivatives. Next, we construct the formula for Laplace transform of fractional Dzherbashian-Nersesian operator. Furthermore, we extend the Lemma 15.2 of \cite{samko-book} for fractional Dzherbashian-Nersesian operator.

	Case-I: For $\zeta_{m}=...=\zeta_{1}=1$ and $\zeta_{0}=1+\zeta-m$, where $\zeta_{0}\in (0,1)$, Equation  (\ref{DzhrNer-mathematical}) interpolates the Riemann-Liouville fractional derivative of order $\zeta \in (m-1,m)$, i.e.,
	\begin{align*}
		\mathcal{D}^{\varrho_{m}}_{0+,t} g(t)=\frac{d^{m}}{dt^{m}}J^{m-\zeta}_{0+,t}g(t)=D^{\zeta}_{0_+,t}g(t).
	\end{align*}
	In particular, for $m=1$, $\zeta_{1}=1$ and $0<\zeta_{0}=\zeta<1$,
	\begin{align*}
		\mathcal{D}^{\varrho_{1}}_{0+,t} g(t)=\frac{d}{dt}J^{1-\zeta}_{0+,t}g(t),
	\end{align*}
	which is Riemann-Liouville fractional derivative of order $\zeta \in (0,1)$.
	
    Case-II: For $\zeta_{m-1}=...=\zeta_{0}=1$ and $\zeta_{m}=1+\zeta-m$, where $\zeta_{m}\in (0,1)$, Equation (\ref{DzhrNer-mathematical}) reduces to another well known Caputo fractional derivative of order $\zeta \in (m-1,m)$, i.e.,
	\begin{align*}
		\mathcal{D}^{\varrho_{m}}_{0+,t} g(t)=J^{m-\zeta}_{0+,t}\frac{d^{m}}{dt^{m}}g(t)={}^{c}D^{\zeta}_{0_+,t}g(t).
	\end{align*}
	More specifically for $m=1$, $0<\zeta_{1}=\zeta<1$ and $\zeta_{0}=1$,
	\begin{align*}
		\mathcal{D}^{\varrho_{1}}_{0+,t} g(t)=J^{1-\zeta}_{0+,t}\frac{d}{dt}g(t),
	\end{align*}
	which is Caputo fractional derivative of order $\zeta \in (0,1)$.
	
    Case-III: For $m\geq 2$, $\zeta_{m-1}=...=\zeta_{1}=1$,  $\zeta_{m}=1-\beta(m-\zeta)$, and $\zeta_{0}=1-(m-\zeta)(1-\beta)$, where $0<\zeta_{0}, \zeta_{m}<1$, Equation (\ref{DzhrNer-mathematical}) gives rise to famous Hilfer fractional derivative of order $\zeta \in (m-1,m)$ and type $\beta\in[0,1]$, i.e.,
	\begin{align*}
		\mathcal{D}^{\varrho_{m}}_{0+,t} g(t)= J^{\beta(m-\zeta)}_{0_+,t}\frac{d^{m}}{dt^{m}}J^{(m-\zeta)(1-\beta)}_{0_+,t}g(t),
	\end{align*}
	More particularly, for $\zeta_{0}=1-(1-\zeta)(1-\beta)$ and $\zeta_{1}=1-\beta(1-\zeta)$, where $0<\zeta_{0}, \zeta_{1}<1$, then
	\begin{align*}
		\mathcal{D}^{\varrho_{1}}_{0+,t} g(t)= J^{\beta(1-\zeta)}_{0_+,t}\frac{d}{dt}J^{(1-\zeta)(1-\beta)}_{0_+,t}g(t),
	\end{align*}
	which is Hilfer fractional derivative of order $\zeta \in (0,1)$ and type $\beta\in[0,1]$.
\begin{lemma}
	Laplace transform of Dzherbashian-Nersesian fractional operator of order $0<\varrho_{m}<m$ is given as
	\begin{align}
		 \mathcal{L}\{\mathcal{D}^{\varrho_{m}}_{0+,t}g(t)\}=s^{\varrho_{m}}\mathcal{L}\{g(t)\}-\sum_{k=1}^{m}s^{\varrho_{m}-\varrho_{m-k}-1}\;\mathcal{D}^{\varrho_{m-k}}_{0+,t}g(t)\big|_{t=0}.\label{LaplaceDN}
	\end{align}
\end{lemma}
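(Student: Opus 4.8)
The plan is to peel the composition in the definition \eqref{DzhrNer-mathematical} one factor at a time, applying the two elementary Laplace rules
\[
\mathcal{L}\{J^{\alpha}_{0+,t}h(t)\}(s)=s^{-\alpha}\mathcal{L}\{h(t)\}(s),\qquad \mathcal{L}\{D^{\zeta}_{0+,t}h(t)\}(s)=s^{\zeta}\mathcal{L}\{h(t)\}(s)-\bigl[J^{1-\zeta}_{0+,t}h(t)\bigr]_{t=0},
\]
valid for $0<\zeta\le 1$ (so that $\ceil*{\zeta}=1$ and only one boundary term survives), together with the observation that every partial composition is again a Dzherbashian--Nersesian operator of lower order. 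Concretely, I would set $g_{0}:=g$ and $g_{j+1}:=D^{\zeta_{j}}_{0+,t}g_{j}$ for $j=0,\dots,m-1$, so that $g_{k}=D^{\zeta_{k-1}}_{0+,t}\cdots D^{\zeta_{0}}_{0+,t}g$ and $\mathcal{D}^{\varrho_{m}}_{0+,t}g=J^{1-\zeta_{m}}_{0+,t}g_{m}$. The decisive structural remark is that $J^{1-\zeta_{k}}_{0+,t}g_{k}=\mathcal{D}^{\varrho_{k}}_{0+,t}g$ (with the natural convention $\mathcal{D}^{\varrho_{0}}_{0+,t}g=J^{1-\zeta_{0}}_{0+,t}g$), so the boundary term produced when differentiating $g_{k}$ is precisely $\mathcal{D}^{\varrho_{k}}_{0+,t}g\big|_{t=0}$, which is exactly the kind of datum appearing on the right-hand side of \eqref{LaplaceDN}.

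With $\sigma_{k}:=\sum_{j=0}^{k-1}\zeta_{j}$ (so that $\sigma_{k}=\varrho_{k-1}+1$), I would prove by induction on $k$ that
\[
\mathcal{L}\{g_{k}\}(s)=s^{\sigma_{k}}\mathcal{L}\{g\}(s)-\sum_{j=1}^{k}s^{\sigma_{k}-\sigma_{j}}\,\mathcal{D}^{\varrho_{j-1}}_{0+,t}g(t)\big|_{t=0}.
\]
The base case $k=1$ is the single-derivative rule applied to $g_{1}=D^{\zeta_{0}}_{0+,t}g$, using $J^{1-\zeta_{0}}_{0+,t}g=\mathcal{D}^{\varrho_{0}}_{0+,t}g$. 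For the inductive step I would write $g_{k+1}=D^{\zeta_{k}}_{0+,t}g_{k}$, apply the derivative rule, substitute the inductive hypothesis, and use $s^{\zeta_{k}}s^{\sigma_{k}-\sigma_{j}}=s^{\sigma_{k+1}-\sigma_{j}}$ together with $\bigl[J^{1-\zeta_{k}}_{0+,t}g_{k}\bigr]_{t=0}=\mathcal{D}^{\varrho_{k}}_{0+,t}g\big|_{t=0}$; the latter becomes the new $j=k+1$ summand because its exponent $\sigma_{k+1}-\sigma_{k+1}=0$.

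The conclusion then follows by taking $k=m$ and applying the integral rule: since $\mathcal{D}^{\varrho_{m}}_{0+,t}g=J^{1-\zeta_{m}}_{0+,t}g_{m}$ we get $\mathcal{L}\{\mathcal{D}^{\varrho_{m}}_{0+,t}g\}=s^{\zeta_{m}-1}\mathcal{L}\{g_{m}\}$, and the identity $\zeta_{m}-1+\sigma_{m}=\varrho_{m}$ collapses the leading factor to $s^{\varrho_{m}}\mathcal{L}\{g\}$. For the boundary sum, the exponent of the $j$-th term is $\zeta_{m}-1+\sigma_{m}-\sigma_{j}=\varrho_{m}-\sigma_{j}=\varrho_{m}-\varrho_{j-1}-1$; reindexing by $k=m-j+1$ turns $\sum_{j=1}^{m}$ into $\sum_{k=1}^{m}s^{\varrho_{m}-\varrho_{m-k}-1}\,\mathcal{D}^{\varrho_{m-k}}_{0+,t}g\big|_{t=0}$, which is \eqref{LaplaceDN}. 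The only genuine care required---and the step I expect to demand the most attention---is the exponent bookkeeping: keeping $\sigma_{k}$, $\varrho_{k}$ and the shift $\varrho_{k}=\sigma_{k+1}-1$ straight, and verifying that each boundary term is a bona fide lower-order Dzherbashian--Nersesian operator evaluated at $t=0$ rather than a bare fractional integral. I would also record the standing regularity hypotheses (the function $g$ and the successive $g_{j}$ lying in the appropriate $AC$/$L_{1}$ classes and of admissible exponential growth) under which the two elementary Laplace rules, and the vanishing of the transformed quantities at infinity, are legitimate.
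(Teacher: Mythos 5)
Your proposal is correct and follows essentially the same route as the paper: both peel the composition \eqref{DzhrNer-mathematical} one factor at a time using the elementary Laplace rules for the Riemann--Liouville integral and for derivatives of order in $(0,1]$, and both identify each boundary term $\bigl[J^{1-\zeta_{k}}_{0+,t}D^{\zeta_{k-1}}_{0+,t}\cdots D^{\zeta_{0}}_{0+,t}g\bigr]_{t=0}$ as the lower-order operator $\mathcal{D}^{\varrho_{k}}_{0+,t}g\big|_{t=0}$. The only difference is organizational: the paper unrolls the recursion informally from the outermost factor inward (``proceeding in the similar manner''), whereas you formalize the same computation as an explicit inside-out induction with the bookkeeping variable $\sigma_{k}$, which makes the exponent arithmetic airtight but adds no new idea.
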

\begin{proof}
	From Equation (\ref{DzhrNer-mathematical}),
	\begin{align*}
		\mathcal{D}^{\varrho_{m-k}}_{0+,t} \equiv J^{1-\zeta_{m-k}}_{0+,t}D^{\zeta_{m-k-1}}_{0+,t}D^{\zeta_{m-k-2}}_{0+,t}...D^{\zeta_{1}}_{0+,t}D^{\zeta_{0}}_{0+,t},\quad k=1,...,m,
	\end{align*}
	\begin{align*}
		\mathcal{D}^{\varrho_{m+1}}_{0+,t}\equiv J^{1-\zeta_{m+1}}_{0+,t}\mathcal {D}^{\varrho_{m}+1}_{0+,t},\:\mathcal {D}^{\varrho_{0}+1}_{0+,t}\equiv D^{\zeta_{0}}_{0+,t}.
	\end{align*}
	Taking Laplace transform of (\ref{DzhrNer-mathematical})
	\begin{align*}
		\mathcal{L}\{\mathcal {D}^{\varrho_{m}}_{0+,t}g(t)\}&=\mathcal{L}\{J^{1-\zeta_{m}}_{0+,t}D^{\zeta_{m-1}}_{0+,t}D^{\zeta_{m-2}}_{0+,t}...D^{\zeta_{1}}_{0+,t}D^{\zeta_{0}}_{0+,t}g(t)\},\\
		& =s^{\zeta_{m}-1}\mathcal{L}\{D^{\zeta_{m-1}}_{0+,t}D^{\zeta_{m-2}}_{0+,t}...D^{\zeta_{1}}_{0+,t}D^{\zeta_{0}}_{0+,t}g(t)\},\\
		 &=s^{\zeta_{m}-1}(s^{\zeta_{m-1}}\mathcal{L}\{D^{\zeta_{m-2}}_{0+,t}...D^{\zeta_{1}}_{0+,t}
		D^{\zeta_{0}}_{0+,t}g(t)\}\\
		&-\big(D^{\zeta_{m-1}-1}_{0+,t}
		 (D^{\zeta_{m-2}}_{0+,t}...D^{\zeta_{1}}_{0+,t}D^{\zeta_{0}}_{0+,t}g(t))\big)|_{t=0}).
	\end{align*}
	Proceeding in the similar manner, we eventually have
	\begin{align*}
		\mathcal{L}\{\mathcal {D}^{\varrho_{m}}_{0+,t}g(t)\}=s^{\varrho_{m}}\mathcal{L}\{g(t)\}-\sum_{k=1}^{m}s^{\varrho_{m}-\varrho_{m-k}-1}\;\mathcal {D}^{\varrho_{m-k}}_{0+,t}g(t)\big|_{t=0}.
	\end{align*}
\end{proof}
\begin{remark}
	On the substitution of $\zeta_{m}=...=\zeta_{1}=1$ and $\zeta_{0}=1+\zeta-m$, where $\zeta_{0}\in (0,1)$, in (\ref{LaplaceDN}), we have
	\begin{align*}	
		\mathcal{L}\{\mathcal {D}^{\varrho_{m}}_{0+,t}g(t)\}=s^{\zeta}\mathcal{L}\{g(t)\}-\sum_{i=0}^{m-1}s^{m-i-1}\big({D}^{{i}}_{0+,t}J^{m-\zeta}_{0+,t}g(t)\big)_{t=0},
	\end{align*}
	i.e., formula for Laplace transform of Riemann-Liouville fractional derivative of order $\zeta \in (m-1,m)$.
	
	In particular for $m=1$, if we set the parameters $\zeta_{1}=1$ and $0<\zeta_{0}=\zeta<1$, Equation (\ref{LaplaceDN}) reduces to the Laplace transform of Riemann-Liouville fractional derivative of order $\zeta \in (0,1)$, i.e.,
	\begin{align*}
		\mathcal{L}\{\mathcal {D}^{\varrho_{1}}_{0+,t}g(t)\}=s^{\zeta}	 \mathcal{L}\{g(t)\}-J^{1-\zeta}_{0+,t}g(t)|_{t=0}.
	\end{align*}
\end{remark}
\begin{remark}
	On using $\zeta_{m-1}=...=\zeta_{0}=1$ and $\zeta_{m}=1+\zeta-m$, where $\zeta_{m}\in (0,1)$, in (\ref{LaplaceDN}), we obtain the formula for Laplace transform of Caputo fractional derivative of order in $\zeta \in (m-1,m)$, i.e.,
	\begin{align*}
		\mathcal{L}\{\mathcal {D}^{\varrho_{m}}_{0+,t}g(t)\}=s^{\zeta}\mathcal{L}\{g(t)\}-\sum_{i=0}^{m-1}s^{\zeta-i-1}\frac{d^{i}}{dt^{i}}g(t)|_{t=0}.
	\end{align*}
	In particular, for $m=1$, we have the Laplace transform of Caputo fractional derivative of order $\zeta \in (0,1)$ by setting $0<\zeta_{1}=\zeta<1$ and $\zeta_{0}=1$ in (\ref{LaplaceDN}), i.e.,
	\begin{align*}
		\mathcal{L}\{\mathcal {D}^{\varrho_{1}}_{0+,t}g(t)\}=s^{\zeta}	 \mathcal{L}\{g(t)\}-s^{\zeta-1}g(t)|_{t=0}.
	\end{align*}
\end{remark}

\begin{remark}
	On setting $\zeta_{m-1}=...=\zeta_{1}=1$, $\zeta_{m}=1-\beta(m-\zeta)$,  $\zeta_{0}=1-(m-\zeta)(1-\beta)$, where $0<0<\zeta_{0}, \zeta_{m}<1$, Equation (\ref{LaplaceDN}) yields the formula for Laplace transform of Hilfer fractional derivative of order $\zeta \in (m-1,m)$ and type $\beta\in[0,1]$, i.e.,
	\begin{align*}
		\mathcal{L}\{\mathcal {D}^{\varrho_{m}}_{0+,t}g(t)\}=s^{\zeta}	 \mathcal{L}\{g(t)\}-\sum_{i=0}^{m-1}s^{i-\beta(m-\alpha)}\frac{d^{m-i-1}}{dt^{m-i-1}}J^{(m-\zeta)(1-\beta)}_{0+,t}g(t)|_{t=0},
	\end{align*}
	Similarly, for $m=1$, $\zeta_{1}=1-\beta(1-\zeta)$ and $\zeta_{0}=1-(1-\zeta)(1-\beta)$, where $0<\zeta_{0},\zeta_{1}<1$ in Equation (\ref{LaplaceDN}), we obtain
	\begin{align*}
		\mathcal{L}\{\mathcal {D}^{\varrho_{1}}_{0+,t}g(t)\}=s^{\zeta}	 \mathcal{L}\{g(t)\}-s^{-\beta(1-\zeta)}J^{(1-\zeta)(1-\beta)}_{0+,t}g(t)|_{t=0},
	\end{align*}
	i.e. Laplace transform for Hilfer fractional derivative of order $\zeta \in (0,1)$ and type $\beta\in[0,1]$.
\end{remark}

\begin{lemma}\label{DzhrSamkolemma}
	Let $g_{i}$ be a sequence of functions defined on $(0,b]$ for each $i\in\mathbb{N}$, such that the following conditions hold:
	\begin{enumerate}
		\item Derivatives $D^{\zeta_{0}}_{0+,t}g_{i}(t)$,
		$D^{\zeta_{1}}_{0+,t}D^{\zeta_{0}}_{0+,t}g_{i}(t)$, ...,
		$D^{\zeta_{m-1}}_{0+,t}...D^{\zeta_{0}}_{0+,t}g_{i}(t)$ for $i\in \mathbb{N}, t\in (0,b]$ exist,
		
		\item the series $\sum_{i=1}^{\infty}g_{i}(t)$ and  $\sum_{i=1}^{\infty}D^{\zeta_{0}}_{0+,t}g_{i}(t)$, $\sum_{i=1}^{\infty}D^{\zeta_{1}}_{0+,t}D^{\zeta_{0}}_{0+,t}g_{i}(t)$, ...,\\
		$\sum_{i=1}^{\infty}D^{\zeta_{m-1}}_{0+,t}... D^{\zeta_{1}}_{0+,t}D^{\zeta_{0}}_{0+,t}g_{i}(t)$ are
		uniformly convergent on the interval $[a+\varepsilon,b]$ for any $\varepsilon>0$.
	\end{enumerate}
	Then
	\begin{align*}
		\mathcal{D}^{\varrho_{m}}_{0+,t}	 \displaystyle\sum_{i=1}^{\infty}g_{i}(t)=\displaystyle\sum_{i=1}^{\infty}\mathcal{D}^{\varrho_{m}}_{0+,t}g_{i}(t).
	\end{align*}
\end{lemma}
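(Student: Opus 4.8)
The plan is to exploit the fact that $\mathcal{D}^{\varrho_{m}}_{0+,t}$ is the composition of the single fractional integral $J^{1-\zeta_{m}}_{0+,t}$ with the chain of Riemann--Liouville derivatives $D^{\zeta_{m-1}}_{0+,t}\cdots D^{\zeta_{0}}_{0+,t}$, and to move the summation inside one factor at a time. For $j=0,1,\dots,m$ I would write the partial compositions
\begin{align*}
G_{i}^{(0)}(t):=g_{i}(t),\qquad G_{i}^{(j)}(t):=D^{\zeta_{j-1}}_{0+,t}D^{\zeta_{j-2}}_{0+,t}\cdots D^{\zeta_{0}}_{0+,t}g_{i}(t),\quad j\geq 1.
\end{align*}
Condition (2) says precisely that every one of the series $\sum_{i=1}^{\infty}G_{i}^{(j)}$, $j=0,\dots,m$, converges uniformly on each interval $[\varepsilon,b]$, $\varepsilon>0$. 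First I would prove, by induction on $j$, the intermediate identity
\begin{align*}
D^{\zeta_{j-1}}_{0+,t}\cdots D^{\zeta_{0}}_{0+,t}\sum_{i=1}^{\infty}g_{i}(t)=\sum_{i=1}^{\infty}G_{i}^{(j)}(t),\qquad j=0,1,\dots,m.
\end{align*}

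The base case $j=0$ is vacuous. For the inductive step, suppose the identity holds for some $j<m$. Both $\sum_{i}G_{i}^{(j)}$ and $\sum_{i}G_{i}^{(j+1)}=\sum_{i}D^{\zeta_{j}}_{0+,t}G_{i}^{(j)}$ converge uniformly on compact subintervals of $(0,b]$ by condition (2), and each $D^{\zeta_{j}}_{0+,t}G_{i}^{(j)}$ exists by condition (1). Hence Lemma~15.2 of \cite{samko-book}, applied to the single Riemann--Liouville derivative $D^{\zeta_{j}}_{0+,t}$, permits term-by-term differentiation:
\begin{align*}
D^{\zeta_{j}}_{0+,t}\sum_{i=1}^{\infty}G_{i}^{(j)}(t)=\sum_{i=1}^{\infty}D^{\zeta_{j}}_{0+,t}G_{i}^{(j)}(t)=\sum_{i=1}^{\infty}G_{i}^{(j+1)}(t).
\end{align*}
Composing with the inductive hypothesis advances the identity to $j+1$, and after $m$ steps one reaches
\begin{align*}
D^{\zeta_{m-1}}_{0+,t}\cdots D^{\zeta_{0}}_{0+,t}\sum_{i=1}^{\infty}g_{i}(t)=\sum_{i=1}^{\infty}G_{i}^{(m)}(t).
\end{align*}

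It remains to apply the outer integral $J^{1-\zeta_{m}}_{0+,t}$ to both sides and to pull it through the sum, which I expect to be the main obstacle. If $\zeta_{m}=1$ the integral is the identity and there is nothing to do, so assume $0<1-\zeta_{m}<1$; then the issue is to justify
\begin{align*}
J^{1-\zeta_{m}}_{0+,t}\sum_{i=1}^{\infty}G_{i}^{(m)}=\frac{1}{\Gamma(1-\zeta_{m})}\int_{0}^{t}(t-\tau)^{-\zeta_{m}}\sum_{i=1}^{\infty}G_{i}^{(m)}(\tau)\,d\tau=\sum_{i=1}^{\infty}J^{1-\zeta_{m}}_{0+,t}G_{i}^{(m)}.
\end{align*}
The delicate point is that condition (2) only furnishes uniform convergence on $[\varepsilon,b]$, away from the left endpoint, whereas the Abel kernel $(t-\tau)^{-\zeta_{m}}$ is singular at $\tau=t$ and the integration runs down to $\tau=0$. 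The remedy is to split $\int_{0}^{t}=\int_{0}^{\delta}+\int_{\delta}^{t}$: on $[\delta,t]$ the uniform convergence of $\sum_{i}G_{i}^{(m)}$ allows the sum and integral to be interchanged, while on $[0,\delta]$ the integrability of the kernel together with the domination of the partial sums near the origin (which is exactly what is needed for $\mathcal{D}^{\varrho_{m}}_{0+,t}\sum_i g_i$ to be defined) makes the contribution vanish as $\delta\to0$. This is precisely the mechanism already employed inside Lemma~15.2 of \cite{samko-book} to handle the integral part of a Riemann--Liouville derivative, so no new device is required. The interchange then gives $J^{1-\zeta_{m}}_{0+,t}\sum_{i}G_{i}^{(m)}=\sum_{i}\mathcal{D}^{\varrho_{m}}_{0+,t}g_{i}$, which combined with the chain identity above yields $\mathcal{D}^{\varrho_{m}}_{0+,t}\sum_{i}g_{i}=\sum_{i}\mathcal{D}^{\varrho_{m}}_{0+,t}g_{i}$, as claimed.
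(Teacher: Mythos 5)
Your proposal follows the same route as the paper: write $\mathcal{D}^{\varrho_{m}}_{0+,t}$ as the composition $J^{1-\zeta_{m}}_{0+,t}D^{\zeta_{m-1}}_{0+,t}\cdots D^{\zeta_{0}}_{0+,t}$ and pull the summation through one factor at a time by repeated application of Lemma 15.2 of \cite{samko-book}, exactly as the paper does. If anything, you are more careful than the paper, which disposes of the outer integral $J^{1-\zeta_{m}}_{0+,t}$ with ``continuing in the same manner,'' whereas you explicitly flag and treat the interchange of $J^{1-\zeta_{m}}_{0+,t}$ with the sum---the one step that Lemma 15.2 (a statement about term-by-term \emph{differentiation}) does not directly cover.
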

\begin{proof}
	From Equation (\ref{DzhrNer-mathematical}), we have
	\begin{align*}
		 \mathcal{D}^{\varrho_{m}}_{0+,t}\displaystyle\sum_{i=1}^{\infty}g_{i}(t)=\big(J^{1-\zeta_{m}}_{0+,t}D^{\zeta_{m-1}}_{0+,t}D^{\zeta_{m-2}}_{0+,t}...D^{\zeta_{1}}_{0+,t}D^{\zeta_{0}}_{0+,t}\big)\displaystyle\sum_{i=1}^{\infty}g_{i}(t).
	\end{align*}
	Using Lemma (see \cite{samko-book}, page 278, Lemma 15.2), we obtain
	\begin{align*}
		 \mathcal{D}^{\varrho_{m}}_{0+,t}\displaystyle\sum_{i=1}^{\infty}g_{i}(t)&=\big(J^{1-\zeta_{m}}_{0+,t}D^{\zeta_{m-1}}_{0+,t}D^{\zeta_{m-2}}_{0+,t}...D^{\zeta_{1}}_{0+,t}\big)\displaystyle\sum_{i=1}^{\infty}D^{\zeta_{0}}_{0+,t}g_{i}(t).
	\end{align*}
	Continuing in the same manner, we conclusively have
	\begin{align*}
		\mathcal{D}^{\varrho_{m}}_{0+,t}	 \displaystyle\sum_{i=1}^{\infty}g_{i}(t)=\displaystyle\sum_{i=1}^{\infty}\mathcal{D}^{\varrho_{m}}_{0+,t}g_{i}(t).
	\end{align*}
\end{proof}
\subsection{Inverse Source Problems}
In this subsection we are going to investigate two inverse source problems. Firstly, we will study space dependent inverse source problem (\ref{prbmDN})-(\ref{spaceoverdetcdn}). Next, time dependent inverse source problem (\ref{prbmDN})-(\ref{bndryDN}) with integral over-determination condition (\ref{timeoverdetcdn}) is considered. The existence and uniqueness results for both inverse problems are established.

\begin{theorem}\label{spacetheorem1} Let $|\varepsilon|<1$, $0<\varrho_{m}<1$, and
	\begin{itemize}
		\item[(1)] $\varphi_{n}(x)\in C^2\big([0,\pi]\big)$, where $n=0,...,m-1$, be such that $\varphi_{n}(0)=0=\varphi_{n}(\pi)$.
		
		\item[(2)]  $\psi(x)\in C^4\big([0,\pi]\big)$ be such
		that $\psi^{(i)}(0)=0=\psi^{(i)}(\pi)$, for $i=0,2$.
	\end{itemize}
	Then there exists a regular solution of the inverse problem (\ref{prbmDN})-(\ref{spaceoverdetcdn}).
\end{theorem}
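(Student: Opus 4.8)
The plan is to solve the problem by the Fourier method, expanding every datum and every unknown in the orthonormal basis $\{X_k(x)\}$ of eigenfunctions of the spatial involution operator. Writing
\[
u(x,t)=\sum_{k=1}^{\infty}u_k(t)X_k(x),\qquad f(x)=\sum_{k=1}^{\infty}f_kX_k(x),
\]
and expanding $\varphi_n(x)$, $\psi(x)$ with coefficients $\varphi_{n,k}$, $\psi_k$, I would first project (\ref{prbmDN}) onto $X_k$. Since the eigenvalue relation $-X_k''+\varepsilon X_k''(\pi-\cdot)=\lambda_kX_k$ turns the spatial part into multiplication by $\lambda_k$, the PDE collapses into the sequence of scalar fractional equations
\[
\mathcal{D}^{\varrho_{m}}_{0+,t}u_k(t)+\lambda_ku_k(t)=f_k,
\]
subject to the projected initial conditions $\mathcal{D}^{\varrho_{n}}_{0+,t}u_k(t)\big|_{t=0}=\varphi_{n,k}$. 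The eigenvalue bounds (\ref{lamdasestimate}) will be invoked throughout to translate decay in $\lambda_k$ into decay in $k$.

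Next I would solve each scalar equation by applying the Laplace transform formula (\ref{LaplaceDN}) and inverting, obtaining $u_k(t)$ explicitly as a combination of the Mittag-Leffler type functions $e_{\varrho_{m},\cdot}(t;\lambda_k)$ acting on the data $\varphi_{n,k}$, together with the convolution term $f_k\,(1*e_{\varrho_{m},\varrho_{m}})(t;\lambda_k)$ carrying the unknown source. Using Lemma~\ref{AliMittagtypeLemma1} to rewrite $1*e_{\varrho_{m},\varrho_{m}}$ as $\tfrac{1}{\lambda_k}\bigl(1-e_{\varrho_{m},1}(t;\lambda_k)\bigr)$, the over-determination condition $u(x,T)=\psi(x)$ projected onto $X_k$ becomes a single scalar relation that I would solve for the source coefficient, giving
\[
f_k=\frac{\lambda_k}{1-e_{\varrho_{m},1}(T;\lambda_k)}\Bigl(\psi_k-(\text{data terms in }\varphi_{n,k})\Bigr),
\]
where Lemma~\ref{AliMittagtypeLemma1'} guarantees the denominator is bounded away from zero, so $f_k$ is well defined. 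Back-substituting $f_k$ into $u_k(t)$ completes the formal construction of both $u$ and $f$.

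The substance of the proof is then convergence and regularity. I would feed in the coefficient decay supplied by the hypotheses: Lemma~\ref{estimatelemma1} gives $|\varphi_{n,k}|\le k^{-2}\|\varphi_n''\|_x$ from $\varphi_n\in C^2$ with vanishing endpoints, while Lemma~\ref{estimatelemma2} gives the sharper $|\psi_k|\le k^{-4}\|\psi^{(iv)}\|_x$ from $\psi\in C^4$. Combining these with the uniform bound $|E_{\beta,\zeta}(z)|\le C_1/(1+|z|)$ of Lemma~\ref{podlem} and the eigenvalue estimates (\ref{lamdasestimate}), the factor $\lambda_k$ carried by $f_k$ is compensated in the $\psi$-part by the fourth-order decay of $\psi_k$, and in the $\varphi$-part by the extra $1/\lambda_k$ decay that Lemma~\ref{podlem} attaches to $e_{\varrho_{m},1}(T;\lambda_k)$, so that $|f_k|\le C\,k^{-2}$ and $\sum f_kX_k$ converges uniformly. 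I would then show, after multiplying by the weight $t^{\varrho_{m}}$ demanded by the regular-solution class, that the series for $u$, for $u_{xx}$ (hence for $u_{xx}(\pi-x,\cdot)$), and for $\mathcal{D}^{\varrho_{m}}_{0+,t}u=f-\sum_k\lambda_ku_k(t)X_k$ all converge uniformly in $(x,t)$. Uniform convergence of the differentiated series then licenses applying $\mathcal{D}^{\varrho_{m}}_{0+,t}$ term by term via Lemma~\ref{DzhrSamkolemma}, whence the series genuinely satisfies (\ref{prbmDN}); the boundary conditions (\ref{bndryDN}) hold since each $X_k$ vanishes at $0$ and $\pi$, and the initial and over-determination conditions hold by construction.

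The main obstacle I anticipate is the behaviour as $t\to0^{+}$: the functions $e_{\varrho_{m},1}(t;\lambda_k)$ force terms such as $\lambda_k\varphi_{n,k}E_{\varrho_{m}}(-\lambda_kt^{\varrho_{m}})$ to grow like $t^{-\varrho_{m}}$ near the origin, so the unweighted series for $u_{xx}$ and $\mathcal{D}^{\varrho_{m}}_{0+,t}u$ need not converge there. It is precisely to absorb this singularity that the weight $t^{\varrho_{m}}$ enters the definition of a regular solution, and the delicate estimate on which the whole argument turns is verifying that $t^{\varrho_{m}}u(\cdot,t)\in C^{2}([0,\pi])$ and $t^{\varrho_{m}}\mathcal{D}^{\varrho_{m}}_{0+,t}u(x,\cdot)\in C([0,T])$ uniformly in $x$, simultaneously with the $\lambda_k$-growth inherited by $f_k$ from the over-determination condition.
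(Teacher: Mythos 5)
Your proposal follows essentially the same route as the paper's proof: eigenfunction expansion in the involution basis, Laplace-transform solution of the resulting scalar Dzherbashian-Nersesian FDEs, determination of the source coefficients from the final-time condition via Lemmata \ref{AliMittagtypeLemma1} and \ref{AliMittagtypeLemma1'}, and uniform convergence of the weighted series $t^{\varrho_{m}}u$, $t^{\varrho_{m}}u_{xx}$, $t^{\varrho_{m}}\mathcal{D}^{\varrho_{m}}_{0+,t}u$ via Lemmata \ref{podlem}, \ref{estimatelemma1}, \ref{estimatelemma2}, \ref{DzhrSamkolemma} and the Weierstrass M-test. The only differences are cosmetic: you use a single index $k$ where the paper splits the basis into the three families $X_{1}$, $X_{2k+1}$, $X_{2k}$, and you write the source coefficients with the equivalent denominator $\bigl(1-e_{\varrho_{m},1}(T;\lambda_{k})\bigr)/\lambda_{k}$ instead of $e_{\varrho_{m},\varrho_{m}+1}(T;\lambda_{k})$.
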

\begin{proof}
	In the first step, we construct the solution of inverse problem while in the subsequent steps we prove the existence and uniqueness of solution.
	
	Construction of the Solution:
	
	In consideration of the fact, that the set of eigenfunctions $\{X_{k}:k\in \mathbb{N}\}$ forms an orthonormal basis in $L^{2}([0,\pi])$, the solution of inverse problem (\ref{prbmDN})-(\ref{spaceoverdetcdn}) can be written as
	
\begin{align}
	 u(x,t)&=u_{10}(t)X_{1}(x)+\sum_{k=1}^{\infty}\Big(u_{1k}(t)\;X_{2k+1}(x)+u_{2k}(t)\;X_{2k}(x)\Big),\label{spaceU}\\
	f(x)&=f_{10}X_{1}(x)+\sum_{k=1}^{\infty}\Big(f_{1k} \: X_{2k+1}(x)+f_{2k}\: X_{2k}(x)\Big),\label{spaceF}
\end{align}
where $u_{10}(t),u_{1k}(t), u_{2k}(t), f_{10}, f_{1k}$, and $f_{2k}$ are the unknowns to be determined.

	On using Equations (\ref{spaceU}) and (\ref{spaceF}) in Equation (\ref{prbmDN}), we obtain the following system of FDEs
	\begin{align}
		 \mathcal{D}^{\varrho_{m}}_{0+,t}u_{10}(t)+\lambda_{1}u_{10}(t)&=f_{10},\label{spacefdeU1}\\
		 \mathcal{D}^{\varrho_{m}}_{0+,t}u_{1k}(t)+\lambda_{2k+1}u_{1k}(t)&=f_{1k},\label{spacefdeUk}\\
		 \mathcal{D}^{\varrho_{m}}_{0+,t}u_{2k}(t)+\lambda_{2k}u_{2k}(t)&=f_{2k}.\label{spacefdeVk}
	\end{align}
	By means of Lemma \ref{LaplaceDN}, solutions to Equations (\ref{spacefdeU1})-(\ref{spacefdeVk}) are
	\begin{align}
		u_{10}(t)&=\displaystyle \sum_{n=0}^{m-1}\varphi_{(10)n} e_{\varrho_{m},\varrho_{n}+1}(t;\lambda_{1})+f_{10}\;e_{\varrho_{m},\varrho_{m}+1}(t;\lambda_{1}),\label{FDEU10}\\
		u_{1k}(t)&=\displaystyle \sum_{n=0}^{m-1}\varphi_{(1k)n}\; e_{\varrho_{m},\varrho_{n}+1}(t;\lambda_{2k+1})+f_{1k}\;e_{\varrho_{m},\varrho_{m}+1}(t;\lambda_{2k+1}),\label{FDEU1K}\\
		u_{2k}(t)&=\sum_{n=0}^{m-1}\varphi_{(2k)n}\; e_{\varrho_{m},\varrho_{n}+1}(t;\lambda_{2k})+f_{2k}\;e_{\varrho_{m},\varrho_{m}+1}(t;\lambda_{2k}).\label{FDEU2K}
	\end{align}
	respectively. Furthermore, $\varphi_{(10)n}, \varphi_{(1k)n}$ and $\varphi_{(2k)n}$ are the coefficients of series expansion of $\varphi_{n}(x)$ and are defined as
	$$\varphi_{(10)n}=\langle\varphi_{n}(x),X_{1}(x)\rangle,\;
 \varphi_{(1k)n}=\langle\varphi_{n}(x),X_{2k+1}(x)\rangle, \varphi_{(2k)n}=\langle\varphi_{n}(x),X_{2k}(x)\rangle.$$
Due to over-determination condition (\ref{spaceoverdetcdn}) and from Equations (\ref{FDEU10}), (\ref{FDEU1K}) and (\ref{FDEU2K}), we have
	\begin{align}
		 f_{10}&=\frac{\psi_{10}-\sum_{n=0}^{m-1}\varphi_{(10)n} e_{\varrho_{m},\varrho_{n}+1}(T;\lambda_{1})}{e_{\varrho_{m},\varrho_{m}+1}(T;\lambda_{1})}\label{spaceF10k},\\
		 f_{1k}&=\frac{\psi_{1k}-\sum_{n=0}^{m-1}\varphi_{(1k)n} e_{\varrho_{m},\varrho_{n}+1}(T;\lambda_{2k+1})}{e_{\varrho_{m},\varrho_{m}+1}(T;\lambda_{2k+1})}\label{spaceF1k},\\
		 f_{2k}&=\frac{\psi_{2k}-\sum_{n=0}^{m-1}\varphi_{(2k)n} e_{\varrho_{m},\varrho_{n}+1}(T;\lambda_{2k})}{e_{\varrho_{m},\varrho_{m}+1}(T;\lambda_{2k})}\label{spaceF2k},
	\end{align}	
 where $\psi_{10}, \psi_{1k}$ $\psi_{2k}$ are the coefficients of series expansion of $\psi_(x)$ and are defined as
	\begin{align*}
		\psi_{10}=\langle\psi(x),X_{1}(x)\rangle,\;
		\psi_{1k}=\langle\psi(x),X_{2k+1}(x)\rangle,\;
		\psi_{2k}=\langle\psi(x),X_{2k}(x)\rangle.
	\end{align*}
Plugging (\ref{FDEU10})-(\ref{FDEU2K}) and (\ref{spaceF10k})-(\ref{spaceF2k}) into Equation (\ref{spaceU}) yields
\begin{align}
u(x,t)&=\Bigg[\displaystyle \sum_{n=0}^{m-1}\varphi_{(10)n} e_{\varrho_{m},\varrho_{n}+1}(t;\lambda_{1})\notag\\
&+\Bigg(\frac{\psi_{10}-\sum_{n=0}^{m-1}\varphi_{(10)n} e_{\varrho_{m},\varrho_{n}+1}(T;\lambda_{1})}{e_{\varrho_{m},\varrho_{m}+1}(T;\lambda_{1})}\Bigg)e_{\varrho_{m},\varrho_{m}+1}(t;\lambda_{1})\Bigg]\sqrt{\frac{2}{\pi}}\sin x \notag \\
&+\sum_{k=1}^{\infty}\Bigg[\Bigg(\displaystyle \sum_{n=0}^{m-1}\varphi_{(1k)n}\; e_{\varrho_{m},\varrho_{n}+1}(t;\lambda_{2k+1})\notag \\
&+\Bigg(\frac{\psi_{1k}-\sum_{n=0}^{m-1}\varphi_{(1k)n} e_{\varrho_{m},\varrho_{n}+1}(T;\lambda_{2k+1})}{e_{\varrho_{m},\varrho_{m}+1}(T;\lambda_{2k+1})}\Bigg)\;e_{\varrho_{m},\varrho_{m}+1}(t;\lambda_{2k+1}) \Bigg)\notag \\
&\sqrt{\frac{2}{\pi}}\sin(2k+1)x+\Bigg(\sum_{n=0}^{m-1}\varphi_{(2k)n}\; e_{\varrho_{m},\varrho_{n}+1}(t;\lambda_{2k}) \notag\\
&+\Bigg(\frac{\psi_{2k}-\sum_{n=0}^{m-1}\varphi_{(2k)n} e_{\varrho_{m},\varrho_{n}+1}(T;\lambda_{2k})}{e_{\varrho_{m},\varrho_{m}+1}(T;\lambda_{2k})}\Bigg)e_{\varrho_{m},\varrho_{m}+1}(t;\lambda_{2k})\Bigg) \notag\\
&\sqrt{\frac{2}{\pi}}\label{spacesolution}\sin 2kx\Bigg].
\end{align}	
	Existence of the Solution: We intend to demonstrate that the series solution obtained is indeed a regular solution. We show this by the virtue of Weierstrass M-test, that the series representations of solution of inverse problem are uniformly convergent.
	
	We first show that the series representation of  $f(x)$ is uniformly convergent. By using Lemmata \ref{podlem} and \ref{AliMittagtypeLemma1} in  Equation (\ref{spaceF10k}), we obtain
	\begin{align*}
		|f_{10}|\leq C_{2}\lambda_{1}\Big(|\psi_{10}|+\sum_{n=0}^{m-1}\frac{C_{1}T^{\varrho_{n}-\varrho_{m}}}{\lambda_{1}}|\varphi_{(10)n}|\Big).
	\end{align*}
	By Cauchy-Schwartz inequality
	\begin{align}
		|f_{10}|\leq C_{2}(1-\varepsilon)\|\psi\|_{x}+M_{1}\displaystyle\sum_{n=0}^{m-1}T^{\varrho_{n}-\varrho_{m}}\|\varphi_{n}\|_{x}.\label{spaceF10kestimate}
	\end{align}
	\noindent where $M_{1}=C_{1}C_{2}$.
	
	Similarly, using Lemmata \ref{podlem} and \ref{AliMittagtypeLemma1} along with Lemmata \ref{estimatelemma1} and \ref{estimatelemma2} in Equations (\ref{spaceF1k})-(\ref{spaceF2k}) yield
	\begin{align}
		|f_{1k}|&\leq \frac{9C_{2}(1-\varepsilon)}{k^{2}}\|\psi^{(iv)}\|_{x}+\frac{M_{1}}{k^{2}}\displaystyle\sum_{n=0}^{m-1}T^{\varrho_{n}-\varrho_{m}}\|\varphi''_{n}\|_{x},\label{spaceF1kestimate}\\
		|f_{2k}|&\leq \frac{4C_{2}(1+\varepsilon)}{k^{2}}\|\psi^{(iv)}\|_{x}+\frac{M_{1}}{k^{2}}\sum_{n=0}^{m-1}T^{\varrho_{n}-\varrho_{m}}\|\varphi''_{n}\|_{x}.\label{spaceF2kestimate}
	\end{align}
	With the aid of estimates (\ref{spaceF10kestimate})-(\ref{spaceF2kestimate}) and Equation (\ref{spaceF}), evidently $f(x)$ is bounded above by convergent series. Hence, by Weierstrass M-test, it represents a continuous function.
	
Using Lemma \ref{podlem} and inequality (\ref{lamdasestimate}) in Equation (\ref{FDEU10}), we obtain
	\begin{align*}
		|u_{10}(t)|&\leq \frac{C_{1}}{1-\varepsilon}\Big(\sum_{n=0}^{m-1}t^{\varrho_{n}-\varrho_{m}}|\varphi_{(10)n}|+|f_{10}|\Big). 	 
	\end{align*}
	Using inequality (\ref{spaceF10kestimate}) and Cauchy-Schwarz inequality, we have
	\begin{align}
		t^{\varrho_{m}}|u_{10}(t)|&\leq \frac{C_{1}}{1-\varepsilon}\Big(\sum_{n=0}^{m-1}t^{\varrho_{n}}\|\varphi_{n}\|_{x}+t^{\varrho_{m}}|f_{10}|\Big). \label{U10estimate}
	\end{align}
	Similarly,
\begin{align}
	t^{\varrho_{m}}|u_{1k}(t)|\leq \frac{C_{1}}{(1-\varepsilon)k^{2}}\Big(\sum_{n=0}^{m-1}t^{\varrho_{n}}\|\varphi_{n}\|_{x}+t^{\varrho_{m}}|f_{1k}|\Big),\label{U1kestimate}\\
	t^{\varrho_{m}}|u_{2k}(t)|\leq \frac{C_{1}}{(1+\varepsilon)k^{2}}\Big(\sum_{n=0}^{m-1}t^{\varrho_{n}}\|\varphi_{n}\|_{x}+t^{\varrho_{m}}|f_{2k}|\Big).\label{U2kestimate}
\end{align}	
	
	Making use of estimates (\ref{spaceF10kestimate})-(\ref{U2kestimate}) in Equation (\ref{spaceU}),
	it is evident that $t^{\varrho_{m}}u(x,t)$ is bounded above by convergent series. Therefore, by Weierstrass M-test, $t^{\varrho_{m}}u(x,t)$ represents a continuous function.
	
	Next we prove the convergence of $t^{\varrho_{m}}\mathcal{D}^{\varrho_{m}}_{0+,t}u(x,t)$, $t^{\varrho_{m}}u_{xx}(x,t)$ and $t^{\varrho_{m}}u_{xx}(\pi-x,t)$. Firstly, we prove the uniform convergence of the series representation of $t^{\varrho_{m}}\mathcal{D}^{\varrho_{m}}_{0+,t}u(x,t)$ with the aid of Lemma \ref{DzhrSamkolemma}. For this we need to show the uniform convergence of $\{ \sum_{k=1}^{\infty}u_{ik}(t): i=1,2\}$ and $\{ \sum_{k=1}^{\infty}\mathcal{D}^{\varrho_{m}}_{0+,t}u_{ik}(t): i=1,2\}$. Since, $\{ \sum_{k=1}^{\infty}u_{ik}(t): i=1,2\}$ is convergent by using estimates (\ref{U1kestimate})-(\ref{U2kestimate}).
	Therefore, we need only to show that $\{ \sum_{k=1}^{\infty}\mathcal{D}^{\varrho_{m}}_{0+,t}u_{ik}(t): i=1,2\}$ is uniformly convergent.
	
	On using Equation (\ref{spacefdeU1}), we can write
	\begin{align*}
		|\mathcal{D}^{\varrho_{m}}_{0+,t}u_{10}(t)|\leq (1-\varepsilon)|u_{10}(t)|+|f_{10}|.
	\end{align*}
	Using estimate (\ref{U10estimate}) and Cauchy-Schwarz inequality,
	\begin{align}
		 t^{\varrho_{m}}|\mathcal{D}^{\varrho_{m}}_{0+,t}u_{10}(t)|&\leq C_{1}\sum_{n=0}^{m-1}t^{\varrho_{n}}\|\varphi_{n}\|_{x}+(1+C_{1})t^{\varrho_{m}}|f_{10}|.\label{spacedu10est}
	\end{align}
	Similarly,
	\begin{align}
		 t^{\varrho_{m}}\big|\displaystyle\sum_{k=1}^{\infty}\mathcal{D}^{\varrho_{m}}_{0+,t}u_{1k}(t)\big| &\leq \sum_{k=1}^{\infty} \Big( C_{1}\sum_{n=0}^{m-1}\frac{t^{\varrho_{n}}}{k^{2}}\|\varphi''_{n}\|_{x}+(1+C_{1})t^{\varrho_{m}}|f_{1k}|\Big),\label{spacedu1kest}\\
		 t^{\varrho_{m}}\big|\displaystyle\sum_{k=1}^{\infty}\mathcal{D}^{\varrho_{m}}_{0+,t}u_{2k}(t)\big| &\leq \sum_{k=1}^{\infty} \Big(C_{1}\sum_{n=0}^{m-1}\frac{t^{\varrho_{n}}}{k^{2}}\|\varphi''_{n}\|_{x}+(1+C_{1})t^{\varrho_{m}}|f_{2k}|\Big).\label{spacedu2kest}
	\end{align}
	Taking into account the Lemma \ref{DzhrSamkolemma} i.e. $\mathcal{D}^{\varrho_{m}}_{0+,t}	 \sum_{i=1}^{\infty}h_{i}(t)=\sum_{i=1}^{\infty}\mathcal{D}^{\varrho_{m}}_{0+,t}h_{i}(t)$, and estimates (\ref{spaceF10kestimate})-(\ref{spaceF2kestimate}) and (\ref{spacedu10est})-(\ref{spacedu2kest}), we see that $\mathcal{D}^{\varrho_{m}}_{0+,t}u_(x,t)$ is bounded above by convergent numerical series. Hence, $t^{\varrho_{m}}\mathcal{D}^{\varrho_{m}}_{0+,t}u_(x,t)$ represents a continuous function.
	
	Now we prove the convergence of space derivatives $t^{\varrho_{m}}u_{xx}(x,t)$ and $t^{\varrho_{m}}u_{xx}(\pi-x,t)$. It suffices to prove the convergence of series representation of  $t^{\varrho_{m}}u_{xx}(x,t)$ due to the fact that $|t^{\varrho_{m}}u_{xx}(x,t)|=|t^{\varrho_{m}}u_{xx}(\pi-x,t)|$.
	\begin{align*}
		|u_{xx}(x,t)|\leq |u_{10}(t)|+\sum_{k=1}^{\infty}\Big((2k+1)^{2}|u_{1k}(t)|+4k^{2}|u_{2k}(t)|\Big).
	\end{align*}
	By means of Lemmata \ref{estimatelemma1} and \ref{estimatelemma2} alongside estimates (\ref{U1kestimate}) and (\ref{U2kestimate}), we have
	\begin{align*}
		t^{\varrho_{m}}|u_{xx}(x,t)|&\leq \frac{C_{1}}{1-\varepsilon}\Big(\sum_{n=0}^{m-1}t^{\varrho_{n}}\|\varphi_{n}\|_{x}+t^{\varrho_{m}}|f_{10}|\Big)\\
		 &+\sum_{k=1}^{\infty}\frac{9C_{1}}{1-\varepsilon}\Big(\sum_{n=0}^{m-1}\frac{t^{\varrho_{n}}}{k^{2}}\|\varphi''_{n}\|_{x}+t^{\varrho_{m}}|f_{1k}|\Big)\\
		 &+\sum_{k=1}^{\infty}\frac{4C_{1}}{1+\varepsilon}\Big(\sum_{n=0}^{m-1}\frac{t^{\varrho_{n}}}{k^{2}}\|\varphi''_{n}\|_{x}+t^{\varrho_{m}}|f_{2k}|\Big).
	\end{align*}
	It is clear that $u_{xx}(x,t)$ is bounded above by convergent numerical series. Accordingly, by the virtue of Weierstrass M-test, $u_{xx}(x,t)$ represents a continuous function.
	\end{proof}
Uniqueness of the Solution: Let $\{u_1(x,t), f_1(x)\}$ and $\{u_2(x,t), f_2(x)\}$ be two regular solution sets of the
inverse source problem (\ref{prbmDN})- (\ref{spaceoverdetcdn}). Then
$u_1(x_0,t)=u_2(x_0,t)$ for some point $x_0\in (0,\pi)$ implies $f_1(x)=f_2(x)$. The proof can be obtained by following the strategy of Theorem 3.2 of \cite{AliFCAA}.
\begin{remark} For $m=1$, $\xi_{0}=1$ and $\xi_{1}=\zeta$, solution of inverse problem (\ref{prbmDN})-(\ref{spaceoverdetcdn}) that was obtained in \ref{spacesolution} becomes
\begin{align*}
u(x,t)&=\Big(\varphi_{10}e_{\zeta,1}(t;\lambda_{1})
+\Big(\frac{\psi_{10}-\varphi_{10}e_{\zeta,1}(T;\lambda_{1})}{e_{\zeta,\zeta+1}(T;\lambda_{1})}\Big)e_{\zeta,\zeta+1}(t;\lambda_{1})\Big)\sqrt{\frac{2}{\pi}}\sin x \\
&+\sum_{k=1}^{\infty}\Big[\Big(\varphi_{1k}e_{\zeta,1}(t;\lambda_{2k+1})\\
&+\Big(\frac{\psi_{1k}-\varphi_{1k}e_{\zeta,1}(T;\lambda_{2k+1})}{e_{\zeta,\zeta+1}(T;\lambda_{2k+1})}\Big)e_{\zeta,\zeta+1}(t;\lambda_{2k+1})\Big)\sqrt{\frac{2}{\pi}}\sin(2k+1)x\\
&+\Big(\varphi_{2k}e_{\zeta,1}(t;\lambda_{2k})\\
&+\Big(\frac{\psi_{2k}-\varphi_{2k}e_{\zeta,1}(T;\lambda_{2k})}{e_{\zeta,\zeta+1}(T;\lambda_{2k})}\Big)e_{\zeta,\zeta+1}(t;\lambda_{2k})\Big)\sqrt{\frac{2}{\pi}}\sin 2kx \Big],
\end{align*}
which is exactly the same that was obtained in \cite{Tore-Tapd}.
\end{remark}
Next, we are concerned with problem (\ref{prbmDN})-(\ref{bndryDN}) alongside the over-specified condition (\ref{timeoverdetcdn}). We shall determine pair of functions $\{u(x,t),a(t)\}$, whereas $f(x,t)$ is known.
\begin{theorem}\label{td1theorem}
	Let $|\varepsilon|<1$, $0<\varrho_{m}<1$ and $\varphi$, $f(.,t)$ $\in$ $C^{2}([0,\pi])$ be such that $\varphi_{n}(0)=0=\varphi_{n}(\pi)$, where $n=0,...,m-1$, $f(0,t)=0=f(\pi,t)$, and there exists $M_{2}$ satisfying $0<\frac{1}{M_{2}}\leq |\int_{0}^{\pi} f(x,t)dx|$. Then there exists a regular solution of the inverse problem (\ref{prbmDN})-(\ref{bndryDN}) together with the over-determination condition (\ref{timeoverdetcdn}).
\end{theorem}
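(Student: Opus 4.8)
The plan is to imitate the construction of Theorem \ref{spacetheorem1}, but now to treat $a(t)$ as the unknown that is fixed by the energy condition (\ref{timeoverdetcdn}). First I would expand both the sought solution and the \emph{known} function in the orthonormal basis $\{X_k\}$ from Section \ref{SpectralProblem}, writing $u(x,t)=u_{10}(t)X_1(x)+\sum_{k\ge 1}\bigl(u_{1k}(t)X_{2k+1}(x)+u_{2k}(t)X_{2k}(x)\bigr)$ and, with \emph{time-dependent} coefficients, $f(x,t)=f_{10}(t)X_1(x)+\sum_{k\ge 1}\bigl(f_{1k}(t)X_{2k+1}(x)+f_{2k}(t)X_{2k}(x)\bigr)$. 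Substituting these into (\ref{prbmDN}) with $F=a(t)f(x,t)$ and using the eigenvalues $\lambda_1,\lambda_{2k\pm 1},\lambda_{2k}$ produces the scalar fractional ODEs $\mathcal{D}^{\varrho_{m}}_{0+,t}u_{ik}(t)+\lambda_{ik}u_{ik}(t)=a(t)f_{ik}(t)$. Solving each through the Laplace transform formula (\ref{LaplaceDN}), the initial data contribute the same Mittag-Leffler terms $\sum_{n=0}^{m-1}\varphi_{(\cdot)n}\,e_{\varrho_{m},\varrho_{n}+1}(t;\lambda_{ik})$ as in the space-dependent case, while the now time-dependent source yields a convolution term $\int_0^t a(\tau)f_{ik}(\tau)\,e_{\varrho_{m},\varrho_{m}}(t-\tau;\lambda_{ik})\,d\tau$.

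Next I would impose (\ref{timeoverdetcdn}). Since $\int_0^\pi X_{2k}(x)\,dx=0$ while $\int_0^\pi X_{2k+1}(x)\,dx=\tfrac{2}{2k+1}\sqrt{2/\pi}=:b_k$ (and $b_0=2\sqrt{2/\pi}$), only the odd modes survive, so $E(t)=\sum_{k\ge 0}b_k\,u_{(1)k}(t)$. Applying $\mathcal{D}^{\varrho_{m}}_{0+,t}$ term by term, justified by Lemma \ref{DzhrSamkolemma}, and replacing $\mathcal{D}^{\varrho_{m}}_{0+,t}u_{(1)k}$ from the ODE gives
\[
\mathcal{D}^{\varrho_{m}}_{0+,t}E(t)=a(t)\sum_{k\ge 0}b_k f_{(1)k}(t)-\sum_{k\ge 0}b_k\lambda_{(1)k}u_{(1)k}(t).
\]
The decisive simplification is that $\sum_{k\ge 0}b_k f_{(1)k}(t)=\int_0^\pi f(x,t)\,dx$, because the even modes again integrate to zero; the hypothesis $1/M_2\le|\int_0^\pi f(x,t)\,dx|$ then lets me isolate $a(t)$,
\[
a(t)=\frac{1}{\int_0^\pi f(x,t)\,dx}\Bigl(\mathcal{D}^{\varrho_{m}}_{0+,t}E(t)+\sum_{k\ge 0}b_k\lambda_{(1)k}u_{(1)k}(t)\Bigr).
\]
Substituting the expression for $u_{(1)k}(t)$ converts this into a Volterra integral equation of the second kind $a(t)=G(t)+\int_0^t a(\tau)\mathcal{K}(t,\tau)\,d\tau$, where $G$ collects $\mathcal{D}^{\varrho_{m}}_{0+,t}E$ together with the initial-data series and $\mathcal{K}(t,\tau)$ is assembled from $b_k\lambda_{(1)k}f_{(1)k}(\tau)\,e_{\varrho_{m},\varrho_{m}}(t-\tau;\lambda_{(1)k})$.

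The crux is the well-posedness of this Volterra equation, and this is where I expect the main difficulty to lie: the weights behave like $b_k\sim 1/(2k+1)$ while $\lambda_{(1)k}\sim k^2$, so $b_k\lambda_{(1)k}\sim k$ \emph{grows}, and absolute convergence must be recovered purely from the decay of the Fourier coefficients. Using $f(\cdot,t),\varphi_n\in C^2([0,\pi])$ with vanishing endpoints, Lemma \ref{estimatelemma1} supplies $|f_{(1)k}(t)|\le k^{-2}\|f_{xx}(\cdot,t)\|_x$ and $|\varphi_{((1)k)n}|\le k^{-2}\|\varphi_n''\|_x$; combining this with the convolution bound $|g*e_{\varrho_{m},\varrho_{m}}(\cdot;\lambda)|\le (C_1/\lambda)\|g\|_t$ and with Lemma \ref{podlem}, each summand of $G$ and of $\mathcal{K}$ is $O(k^{-3})$, so the extra factor $\lambda_{(1)k}$ is exactly absorbed and both series converge uniformly, making $\mathcal{K}$ a bounded continuous kernel and $G$ continuous (granting $\mathcal{D}^{\varrho_{m}}_{0+,t}E\in C([0,T])$, the natural regularity to assume on $E$). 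A second-kind Volterra equation with bounded continuous kernel is then uniquely solvable in $C([0,T])$ by successive approximations (equivalently, the Neumann/resolvent series), yielding $a\in C([0,T])$. With $a$ determined, the final step is to reinsert it into the series for $u$ and verify $t^{\varrho_{m}}u(\cdot,t)\in C^2([0,\pi])$ and $t^{\varrho_{m}}\mathcal{D}^{\varrho_{m}}_{0+,t}u\in C([0,T])$; these follow exactly as in Theorem \ref{spacetheorem1} via the Weierstrass M-test, the only change being that the convolution source terms are now controlled using $\|a\|_t<\infty$. Everything after the convergence of the kernel series is bookkeeping parallel to the space-dependent case.
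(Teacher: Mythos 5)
Your proposal follows the same architecture as the paper's proof: the identical eigenfunction expansion, the same mode-wise fractional ODEs solved via the Laplace transform formula (\ref{LaplaceDN}) (initial-data Mittag--Leffler terms plus convolutions $e_{\varrho_{m},\varrho_{m}}(\cdot;\lambda)*a f_{ik}$), and the same reduction of the energy condition (\ref{timeoverdetcdn}) to a second-kind Volterra equation for $a(t)$. The paper obtains that equation by applying $\mathcal{D}^{\varrho_{m}}_{0+,t}$ to (\ref{timeoverdetcdn}) and integrating the PDE in $x$, while you apply the operator termwise (Lemma \ref{DzhrSamkolemma}) and eliminate $\mathcal{D}^{\varrho_{m}}_{0+,t}u_{(1)k}$ through the ODEs; these are the same computation, and your kernel matches the paper's $\mathcal{K}$ in (\ref{kappa}) -- indeed your prefactor $b_k\lambda_{(1)k}=2\sqrt{2/\pi}\,(1-\varepsilon)(2k+1)$ appears to be the correct one where the paper prints $(1+\varepsilon)$ and $\lambda_1$ inside the sum. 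The one substantive divergence is the solvability step: the paper defines the map $\mathcal{B}$ and invokes the Banach fixed point theorem, which is a contraction only under the restriction $T<1/(KM_{2})$ (a restriction not reflected in the theorem statement), whereas you solve the Volterra equation by successive approximations (Neumann series), which requires no smallness of $T$ -- a cleaner and slightly stronger conclusion. One caveat, which affects your write-up and the paper's equally: the claim that $\mathcal{K}$ is a \emph{bounded} continuous kernel is not what the estimates deliver. The $O(k^{-3})$ decay comes from the $C_{1}/\lambda$ bound on the Mittag--Leffler factor, which carries a factor $(t-\tau)^{-1}$; the paper itself only proves $(t-\tau)|\mathcal{K}(t,\tau)|\le \mathrm{const}$ and then asserts (\ref{kappaMtest}). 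To run either iteration rigorously one should interpolate the two bounds on $E_{\varrho_{m},\varrho_{m}}$ to get a weakly singular kernel, e.g. $|\mathcal{K}(t,\tau)|\le C(t-\tau)^{\varrho_{m}/2-1}\sum_{k}k^{-2}$, for which the Volterra iteration you propose still converges on all of $[0,T]$; likewise, your explicit hypothesis $\mathcal{D}^{\varrho_{m}}_{0+,t}E\in C([0,T])$ is genuinely needed and is left implicit in the paper.
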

\begin{proof}
As in the proof of Theorem \ref{spacetheorem1}, we shall first construct the solution of the time dependent inverse source problem.

Construction of Solution: Expanding $u(x,t)$ and $f(x)$ by means of orthogonal functions, we have
	\begin{align}
		 u(x,t)&=u_{10}(t)X_{1}(x)+\sum_{k=1}^{\infty}\Big(u_{1k}(t)X_{2k+1}(x)+u_{2k}(t)X_{2k}(x)\Big),\label{timeU}\\
		 f(x,t)&=f_{10}(t)X_{1}(x)+\sum_{k=1}^{\infty}\Big(f_{1k}(t)X_{2k+1}(x)+f_{2k}(t)X_{2k}(x)\Big).\label{timeF}
	\end{align}
	In accordance with the same procedure as in proof of Theorem \ref{spacetheorem1}, we have
	\begin{align}\label{timesolnprefdessol}
		u(x,t)&=\Big(\sum_{n=0}^{m-1}\varphi_{(10)n} e_{\varrho_{m},\varrho_{n}+1}(t;\lambda_{1})+e_{\varrho_{m},\varrho_{m}}(t;\lambda_{1})*a(t)f_{10}(t)\Big)X_{1}(x)\notag\\
		&+\sum_{k=1}^{\infty}\Big(\sum_{n=0}^{m-1}\varphi_{(1k)n} e_{\varrho_{m},\varrho_{n}+1}(t;\lambda_{2k+1})+e_{\varrho_{m},\varrho_{m}}(t;\lambda_{2k+1})\notag\\
		 &*a(t)f_{1k}(t)\Big)X_{2k+1}(x)+\sum_{k=1}^{\infty}\Big(\sum_{n=0}^{m-1}\varphi_{(2k)n} e_{\varrho_{m},\varrho_{n}+1}(t;\lambda_{2k})\notag\\
		 &+e_{\varrho_{m},\varrho_{m}}(t;\lambda_{2k})*a(t)f_{2k}(t)\Big)X_{2k}(x).
	\end{align}
	Applying the fractional Dzherbashian-Nersesian operator on over-determination condition (\ref{timeoverdetcdn}) and using equation (\ref{prbmDN}), we have
	\begin{align*}\label{operatoreq}
		a(t)=\Big(\int_{0}^{\pi}f(x,t)dx \Big)^{-1}\Big(\mathcal{D}^{\varrho_{m}}_{0+,t}E(t)+\mathcal{F}(t)+\int_{0}^{t}\mathcal{K}(t,\tau)a(\tau)d\tau\Big),
	\end{align*}
	where
	\begin{eqnarray}\label{Fourier}
		 \mathcal{F}(t)&=2(1+\varepsilon)\sqrt{\frac{2}{\pi}}\Bigg(\displaystyle\sum_{n=0}^{m-1}\varphi_{(10)n}e_{\varrho_{m},\varrho_{n}+1}(t,\lambda_{1})+\displaystyle\sum_{k=1}^{\infty}(2k+1)\notag\\
		 &\Big(\displaystyle\sum_{n=0}^{m-1}\varphi_{(1k)n}e_{\varrho_{m},\varrho_{n}+1}(t,\lambda_{2k+1})\Big)\Bigg),
	\end{eqnarray}
	and
	\begin{eqnarray}\label{kappa}
		 \mathcal{K}(t,\tau)&=2(1+\varepsilon)\sqrt{\frac{2}{\pi}}\Big(e_{\varrho_{m},\varrho_{m}}(t-\tau,\lambda_{1})f_{10}(\tau)\notag\\
		 &+\displaystyle\sum_{k=1}^{\infty}(2k+1)e_{\varrho_{m},\varrho_{m}}(t-\tau,\lambda_{1})f_{1k}(\tau)\Big).
	\end{eqnarray}
	Define an operator $\mathcal{B}(a(t)):=a(t)$.
	We are determined to prove that the mapping $\mathcal{B}:C([0,T]) \rightarrow C([0,T])$ is well defined and a contraction mapping. However, firstly we shall prove that the series involved in (\ref{Fourier}) and (\ref{kappa}) are uniformly convergent i.e. for $a(t)\in C([0,T])$, we have $\mathcal{B}(a(t))\in C([0,T])$. Taking into account the Lemmata \ref{podlem} and \ref{estimatelemma1}, we obtain
	\begin{align*}
		t^{\varrho_{m}}|\mathcal{F}(t)|&\leq 2\sqrt{\frac{2}{\pi}}\Big(\frac{1+\varepsilon}{1-\varepsilon}\Big)C_{1}\sum_{n=0}^{m-1}t^{\varrho_{n}}\Big(\|\varphi_{n}\|_{x}+\sum_{k=1}^{\infty}\frac{3}{k^{3}}\|\varphi''_{n}\|_{x}\Big),\\
		(t-\tau)|\mathcal{K}(t,\tau)|&\leq 2\sqrt{\frac{2}{\pi}}\Big(\frac{1+\varepsilon}{1-\varepsilon}\Big)C_{1}\sum_{n=0}^{m-1}\Big(\|f(x,t)\|_{x,t}+\sum_{k=1}^{\infty}\frac{3}{k^{3}}\Big\|f_{xx}(x,t)\|_{x,t}\Big).
	\end{align*}
	Hence by Weierstrass M-Test, the series in (\ref{Fourier}) and (\ref{kappa}) are convergent. Thus, for $a(t)\in C([0,T])$, we have $\mathcal{B}(a(t))\in C([0,T])$. In addition, we may write
	\begin{equation}\label{kappaMtest}
		\|\mathcal{K}(t,\tau)\|_{t \times t}\leq K,\quad t\in (0,T],
	\end{equation}
	where $K$ is some positive constant. Furthermore, we prove that the mapping $\mathcal{B}:C([0,T]) \rightarrow C([0,T])$ is contraction. Consider
	\begin{align*}
		|\mathcal{B}(a)-\mathcal{B}(b)|	\leq \Big(\int_{0}^{\pi}f(x,t)dx \Big)^{-1} \int_{0}^{t}|a(\tau)-b(\tau)|\;|\mathcal{K}(t,\tau)|d\tau,
	\end{align*}
	from inequality (\ref{kappaMtest}), we obtain
	\begin{align*}
		|\mathcal{B}(a)-\mathcal{B}(b)|	\leq TKM_{2}\max\limits_{0 \leq t \leq T}|a(\tau)-b(\tau)|.
	\end{align*}
	Accordingly, we have
	\begin{align*}
		\|\mathcal{B}(a)-\mathcal{B}(b)\|_{t}	\leq TKM_{2}\|a-b\|_{t}.
	\end{align*}
	Consequently, by Banach fixed-point theorem, the mapping $\mathcal{B}(.)$ is a contraction for $T<1/KM_{2}$, which promises the unique determination of $a(.)\in C([0,T])$.
	
Existence of the Solution: To show the existence of the solution of the time dependent inverse source problem, we will show that the series expansion of $t^{\varrho_{m}}u(x,t)$ given by equation (\ref{timesolnprefdessol}), $t^{\varrho_{m}}\mathcal{D}^{\varrho_{m}}_{0+,t}u(x,t)$, $t^{\varrho_{m}}u_{xx}(x,t)$, and $t^{\varrho_{m}}u_{xx}(\pi-x,t)$ are uniformly convergent. As $a(t)\in C([0,T])$, there exists a positive integer $M_{3}$, such that $|a(t)|\leq M_{3}$. Moreover, by using the same procedure as in the proof of Theorem \ref{spacetheorem1}, we obtain the estimates for $u(x,t)$, $\mathcal{D}^{\varrho_{m}}_{0+,t}u(x,t)$ and $u_{xx}(x,t)$:
	\begin{align*}
		t^{\varrho_{m}}|u(x,t)|&\leq  \frac{C_{1}}{1-\varepsilon}\Big(\sum_{n=0}^{m-1}t^{\varrho_{n}}\|\varphi_{n}\|_{x}+M_{3}t^{\varrho_{m}}\|f(x,t)\|_{x,t}\Big)\notag \\
		 &+\frac{C_{1}}{1-\varepsilon}\sum_{k=1}^{\infty}\frac{1}{k^{2}}\Big(\sum_{n=0}^{m-1}t^{\varrho_{n}}\|\varphi''_{n}\|_{x}+M_{3}t^{\varrho_{m}}\|f(x,t)\|_{x,t}\Big)\notag \\ &+\frac{C_{1}}{1+\varepsilon}\sum_{k=1}^{\infty}\frac{1}{k^{2}}\Big(\sum_{n=0}^{m-1}t^{\varrho_{n}}\|\varphi''_{n}\|_{x}+M_{3}t^{\varrho_{m}}\|f(x,t)\|_{x,t}\Big).
	\end{align*}
\begin{align*}
		t^{\varrho_{m}}|\mathcal{D}^{\varrho_{m}}_{0+,t}u(x,t)|&\leq C_{1}\sum_{n=0}^{m-1}t^{\varrho_{n}}\|\varphi_{n}\|_{x}+M_{3}(1+C_{1})t^{\varrho_{m}}\|f(x,t)\|_{x,t}\notag \\
		+& \sum_{k=1}^{\infty}\frac{2C_{1}}{k^{2}}\Big( \sum_{n=0}^{m-1}t^{\varrho_{n}}\|\varphi''_{n}\|_{x}+M_{3}(1+C_{1})t^{\varrho_{m}}\|f_{xx}(x,t)\|_{x,t}\Big).
	\end{align*}	
    \begin{align*}
		t^{\varrho_{m}}|u_{xx}(x,t)|&\leq \frac{C_{1}}{1-\varepsilon}\Big(\sum_{n=0}^{m-1}t^{\varrho_{n}}\|\varphi_{n}\|_{x}+M_{3}t^{\varrho_{m}}\|f(x,t)\|_{x,t}\Big)\\
		 &+\sum_{k=1}^{\infty}\frac{9}{k^{2}}\Big(\frac{C_{1}}{1-\varepsilon}\sum_{n=0}^{m-1}t^{\varrho_{n}}\|\varphi''_{n}\|_{x}+M_{3}t^{\varrho_{m}}\Big\|f_{xx}(x,t)\|_{x,t}\Big)\\
		 &+\sum_{k=1}^{\infty}\frac{4}{k^{x}}\Big(\frac{C_{1}}{1+\varepsilon}\sum_{n=0}^{m-1}t^{\varrho_{n}}\|\varphi''_{n}\|_{x}+M_{3}t^{\varrho_{m}}\|f_{xx}(x,t)\|_{x,t}).
	\end{align*}
	which on using Weierstrass M-test converge.
\end{proof}
Uniqueness of the Solution: The uniqueness of the source term $a(t)$ has already been proved by using the Banach fixed point theorem. Uniqueness of $u(x,t)$ can be obtained by following the similar lines as in Theorem 2.5 of \cite{Sara4th}.
\section{Conclusions}\label{conclusions}
We investigated in this paper that fractional Dzherbashian-Nersesian operator is generalization to Riemann-Liouville, Caputo and Hilfer fractional derivatives. Moreover, the emphasis was on the inverse problems of finding a space dependent source term and a time-dependent source term of the diffusion equation from final temperature distribution and integral over-determination datum respectively. Under some consistency and regularity conditions on the given datum, the existence and uniqueness of inverse problems are presented by using the Fourier Method alongside Weierstrass M-test and Banach fixed point theorem. Finally, under certain fixation of parameters in fractional Dzherbashian-Nersesian operator, our results generalize results of Torebek et al. \cite{Tore-Tapd}.


 \bigskip \smallskip

 \it

 \noindent
   $^{1,3}$ Department of Mathematics\\
   COMSATS University Islamabad, Park Road\\
   Islamabad, PAKISTAN\\[4pt]
   e-mail: anwar.maths@must.edu.pk, anwaarqau@gmail.com (Anwar Ahmad)\\
   e-mail: salman-amin@comsats.edu.pk (Salman A. Malik, Corr.
   author)\\
   \hfill Received: April 04, 2021 \\[12pt]
   $^2$ Department of Sciences and Humanities\\
   National University of Computer and Emerging Sciences\\
   Islamabad, PAKISTAN \\[4pt]
   e-mail: muhammad.ali.pk.84@gmail.com (Muhammad Ali)

\end{document}